\newcommand{\half}{\frac{1}{2}}
\newcommand{\R}{\mathbb{R}}
\begin{document} 
\newtheorem{prop}{Proposition}[section]
\newtheorem{Def}{Definition}[section]
\newtheorem{theorem}{Theorem}[section]
\newtheorem{lemma}{Lemma}[section]
 \newtheorem{Cor}{Corollary}[section]

\title[Klein-Gordon-Schr\"odinger]{\bf Well-posedness results for a generalized Klein-Gordon-Schr\"odinger system}
\author[Hartmut Pecher]{
{\bf Hartmut Pecher}\\
Fakult\"at f\"ur  Mathematik und Naturwissenschaften\\
Bergische Universit\"at Wuppertal\\
Gau{\ss}str.  20\\
42119 Wuppertal\\
Germany\\
e-mail {\tt pecher@math.uni-wuppertal.de}}
\date{}

\begin{abstract}
We  consider the Klein-Gordon-Schr\"odinger system
$
i \partial_t \psi + \Delta \psi  = \phi^2 \psi - \phi \psi $ , $
(\Box +1)\phi = -2|\psi|^2 \phi + |\psi|^2$
with additional cubic terms and Cauchy data
$
\psi(0) = \psi_0 \in H^s(\R^n) \, , \, \phi(0) = \phi_0 \in H^k(\R^n) \, ,  \, (\partial_t \phi)(0) = \phi_1 \in H^{k-1}(\R^n)  $
in space dimensions $n=2$ and $n=3$ . We prove local existence, uniqueness and continuous dependence on the data in Bourgain-Klainerman-Machedon spaces  for low regularity data, e.g. for $s=-\frac{1}{8}$, $k=\frac{3}{8}+\epsilon$ in the case $n= 2$ and  $s=0$ , $k=\half+\epsilon$ in the case $n=3$. Global well-posedness in energy space is also obtained as  a special case. Moreover, we show "unconditional" uniqueness in the space
$\psi \in C^0([0,T],H^s)$, $\phi \in C^0([0,T],H^{s+\half}) \cap C^1([0,T],H^{s-\half})$, if $s > \frac{3}{22}$ for $n=2$ and $s > \half$ for $n=3$.
\end{abstract}
\maketitle
\renewcommand{\thefootnote}{\fnsymbol{footnote}}
\footnotetext{\hspace{-1.5em}{\it 2010 Mathematics Subject Classification:} 
35Q55, 35L70 \\
{\it Key words and phrases:} Klein-Gordon-Schr\"odinger,  
local well-posedness, unconditional uniqueness}
\normalsize 
\setcounter{section}{0}

\section{Introduction}

\noindent 
Consider the Klein-Gordon-Schr\"odinger system
\begin{align}
\label{0.1}
i \partial_t \psi + \Delta \psi & = \phi^2 \psi - \phi \psi \\
\label{0.2}
(\Box +1)\phi & = -2|\psi|^2 \phi + |\psi|^2
\end{align}
with Cauchy data
\begin{equation} \label{0.3}
\psi(0) = \psi_0 \in H^s(\R^n) \, , \, \phi(0) = \phi_0 \in H^k(\R^n) \, ,  \, (\partial_t \phi)(0) = \phi_1 \in H^{k-1}(\R^n)  \end{equation}
in space dimensions $n=2$ and $n=3$ ,  where $\Box = \partial_t^2 - \Delta$ .

This system is a generalization of the classical Klein-Gordon-Schr\"odinger system (KGS) with Yukawa interaction of a complex nucleon field $\psi$ and a real meson field $\phi$. The classical KGS  system without the cubic nonlinearities was considered extensively in the literature. We are primarily interested in well-posedness results for data with low regularity.

For the classical KGS system in the case $n=2$ local well-posedness in Bourgain-Klainerman-Machedon type spaces (BKM spaces) holds for $s > - \frac{1}{4}$ , $k \ge - \half$, $k-2s <\frac{3}{2}$ , $k-2 < s < k+1$ . These conditions are sharp up to the endpoints. Uniqueness in the larger natural solution spaces
$ \psi \in C^0([0,T],H^s) $ , $ \phi \in C^0([0,T],H^k) \cap C^1([0,T],H^{k-1}) $  ("unconditional uniqueness") holds, if $s,k \ge 0$ . The solutions exist globally (in t), if $s \ge 0$ , $s-\half \le k < s +\half$. These results were shown by the author \cite{P1}.

In the case $n=3$ local well-posedness in BKM spaces holds under the same conditions in $s$ and $k$ except that we have to assume $ k > - \half$. This is also sharp up to the endpoints. The unconditional uniqueness result holds for $ s=k=0$ , in which case also global well-posedness is true. This was proven by the author in \cite{P}.

The proofs are based on the results by Bejenaru and Herr \cite{BH} in the case $n=3$ and Bejenaru-Herr-Holmer-Tataru \cite {BHHT} in the case $n=2$.

In the more general case (\ref{0.1}),(\ref{0.2}) with additional cubic interactions much less is known. For dimension $n=2$ Shi-Li-Wang \cite{SLW} proved global well-posedness in energy space $s=k=1$ using Kato type smoothing estimates for the nonlinear Schr\"odinger equation.

In the present paper this result can be improved by using BKM type spaces and the methods in the author's paper \cite{P1}. We obtain local well-posedness (existence, uniqueness and continuous dependence on the data) for $s > - \frac{1}{8}$ , $k >\frac{3}{8}$ and $s < \min(2k-\frac{1}{4},k+\half)$ , $ s > \max(\frac{k-1}{2},k-\frac{3}{2})$ , e.g. $s=-\frac{1}{8}+$ , $k=\frac{3}{8}+$ , in BKM spaces, which are subsets of the natural solution spaces $\psi \in C^0([0,T],H^s)$ , $\phi \in C^0([0,T],H^k) \cap C^1([0,T],H^{k-1})$ . "Unconditional" uniqueness in these latter spaces is shown for $s > \frac{3}{22}$ and (for simplicity) $k=s+\half$ . Global well-posedness in energy space easily follows as a byproduct. We rely mainly on the results in \cite{P1} combined with bilinear estimates for BKM spaces for the Klein-Gordon part by d'Ancona, Foschi and Selberg \cite{AFS}. We apply only an appropriate  version of the contraction mapping principle which also has the advantage that it is well-known that the solution depends continuously on the data. 

In the case $n=3$ Ran-Shi \cite{RS} proved two main results, first well-posedness in energy space $s=k=1$ by considering atomic solution  spaces $U^2$ and $V^2$ and the perturbed linear Schr\"odinger equation $i\partial_t + (\Delta + \phi)\psi =0$ . Secondly local well-posedness holds in the case $s > \half$ , $k=1$ , which improves earlier results by Shi-Wang-Li-Wang \cite{SWLW}.

In the present paper we use the standard contraction mapping principle and obtain local well-posedness in BKM type spaces for $s \ge 0$ , $k > \half$ , $s < \min(2k-\half,k+\half)$ , $ s > \max(\frac{k-1}{2},k-\frac{3}{2})$ , especially for $s=0$ , $k=\half +$. Unconditional uniqueness holds for $ s > \half$ , $k= s+\half$ . Global well-posedness in energy space also follows easily.

In fact, it is well-known that the system (\ref{0.1}),(\ref{0.2}) satisfies mass conservation
$$ \|\psi\|_{L^2} = \|\psi_0\|_{L^2} $$
and energy conservation
$$ E(t) \equiv \|\nabla \psi\|_{L^2}^2 + \half(\|\partial_t \phi\|_{L^2}^2 + \|\nabla \phi\|_{L^2}^2 + \|\phi\|_{L^2}^2) + 2\|\psi \phi\|_{L^2}^2 - \int \phi |\psi|^2 dx = E(0)\, . $$
One easily shows that, if  $\psi_0 \in H^1$ , $\phi_0 \in H^1$ , $ \phi_1 \in L^2$ , then $$E(0) \le c_0(\|\psi_0\|_{H^1},\|\phi_0\|_{H^1},\|\phi_1|_{L^2}) \, .$$ Moreover
\begin{align*}
|\int \phi |\psi|^2 dx | & \le \|\phi\|_{L^6} \||\psi|^{\half}\|_{L^{12}} \||\psi|^{\frac{3}{2}}\|_{L^{\frac{4}{3}}} \\
& \lesssim \|\nabla \phi\|_{L^2} \|\nabla \psi\|_{L^2}^{\half} \|\psi\|_{L^2}^{\frac{3}{2}} \\
& \le \frac{1}{4} (\| \nabla \phi\|_{L^2}^2 + \|\nabla \psi\|_{L^2}^2) + c \|\psi\|_{L^2}^6 \, ,
\end{align*}
so that
$$ \|\nabla \psi\|_{L^2}^2 + \half(\|\partial_t \phi\|_{L^2}^2 + \|\nabla \phi\|_{L^2}^2 + \|\phi\|_{L^2}^2) \le E(0) + \frac{1}{4} (\|\nabla \phi\|_{L^2}^2 + \|\nabla \psi\|_{L^2}^2) + c \|\psi_0\|_{L^2}^6 \, , $$
which implies the a-priori bound 
$$ \|\nabla \psi\|_{L^2}^2 + \|\partial_t \phi\|_{L^2}^2 + \|\nabla \phi\|_{L^2}^2 + \|\phi\|_{L^2}^2 \le c_1(\|\psi_0\|_{H^1},\|\phi_0\|_{H^1},\|\phi_1\|_{L^2}) \, . $$
Thus, if we have a unique local solution in $[0,T]$ for data $\psi_0 \in H^1$ , $\phi_0 \in H^1$ , $\phi_1 \in L^2$  with $T=T(\|\psi_0\|_{H^1},\|\phi_0\|_{H^1},\|\phi_1\|_{L^2}$ , this solution can easily be extended globally in time.

We only use the standard Bourgain spaces $X^{m,b}$ for the Schr\"odinger equation, which are defined as the completion of ${\mathcal S}(\R^n \times \R)$ with respect to the norm
$$ \|f\|_{X^{m,b}} := \| \langle \xi \rangle^m \langle \tau + |\xi|^2 \rangle^b \widehat{f}(\xi,\tau) \|_{L^2_{\xi\tau}} \, . $$
Similarly the Klainerman-Machedon spaces $X^{m.b}_{\pm}$ for the equation $$i \partial_t n_{\pm} \mp (-\Delta +1)^{\half} n_{\pm} =0$$ are the completion of ${\mathcal S}(\R^n \times \R)$ with respect to the norm
$$ \|f\|_{X_{\pm}^{m,b}} := \| \langle \xi \rangle^m \langle \tau \pm |\xi|\rangle^b \widehat{f}(\xi,\tau) \|_{L^2_{\xi\tau}} \, . $$
For a given time interval $I$ we define
$$ \|f\|_{X^{m,b}(I)} := \inf_ {\tilde{f}_{| I}=f} \|\tilde{f}\|_{X^{m,b}}$$
and similarly $\|f\|_{X^{m,b}_{\pm}(I)}$ .

We now formulate our main results.
\begin{theorem}
\label{Theorem}
(Local well-posedness) 
In the case $n=2$  assume
$$s \ge - \frac{1}{8} \, , \, k > \frac{3}{8} \, , \, s < \min(2k-\frac{1}{4},k+\half) \, , \, s > \max(\frac{k-1}{2},k-\frac{3}{2}) \, . $$
In the case $n=3$ assume
$$s \ge 0 \, , \, k > \half \, , \, s < \min(2k-\half,k+\half) \, , \, s > \max(\frac{k-1}{2},k-\frac{3}{2}) \, . $$
The Cauchy problem (\ref{0.1}),(\ref{0.2}),(\ref{0.3}) has a unique local solution
$$
 \psi \in X^{s,\half+}[0,T] \, , \, \phi \in X^{k,\half+}_+[0,T] + X^{k,\half+}_-[0,T] \, , $$
$$
 \partial_t \phi \in X^{k-1,\half+}_+[0,T] + X^{k-1,\half+}_-[0,T] \, , 
$$
where $T=T(\|\psi_0\|_{H^s},\|\phi_0\|_{H^k},\|\psi_1\|_{H^{k-1}})$ . This solution fufills
$$ \psi \in C^0([0,T],H^s) \, , \, \phi \in C^0([0,T],H^k) \cap C^1([0,T],H^{k-1}) \, . $$
\end{theorem}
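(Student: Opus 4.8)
The plan is to prove this local well-posedness theorem via the standard contraction mapping principle in the Bourgain--Klainerman--Machedon spaces.
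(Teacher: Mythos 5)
Your proposal names the correct framework --- the paper does indeed prove Theorem \ref{Theorem} by a contraction argument in the spaces $X^{s,\half+}$, $X^{k,\half+}_{\pm}$ --- but a one-sentence statement of strategy is not a proof, and everything of substance is missing. The contraction mapping principle only closes once you have the nonlinear estimates with a gain in the modulation exponent (from $\half+\epsilon$ down to $-\half+2\epsilon$), which after time localization produces the small factor $T^{\delta}$ needed for the map to be a contraction. Those estimates are precisely where the hypotheses on $s$ and $k$ in the theorem come from, and you have not stated, let alone proved, any of them.

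Concretely, what you would need to supply is: (i) the reduction of (\ref{0.1}),(\ref{0.2}) to a first-order system via $\phi_{\pm}=\phi\pm i(-\Delta+1)^{-\half}\partial_t\phi$, so that the Klein--Gordon part can be treated in the spaces $X^{k,b}_{\pm}$; (ii) the trilinear estimate $\|\phi_1\phi_2\psi\|_{X^{s,-\half+2\epsilon}}\lesssim\|\phi_1\|_{X^{k,\half+\epsilon}_{\pm_1}}\|\phi_2\|_{X^{k,\half+\epsilon}_{\pm_2}}\|\psi\|_{X^{s,\half+\epsilon}}$, which in the paper is Proposition \ref{Prop.1.1} (resp.\ \ref{Prop.2.1}) and is responsible for the condition $s<\min(2k-\frac{1}{4},k+\half)$ for $n=2$ (resp.\ $s<\min(2k-\half,k+\half)$ for $n=3$); (iii) the trilinear estimate $\||\psi|^2\phi\|_{X^{k-1,-\half+2\epsilon}_{\pm}}\lesssim\|\psi\|^2_{X^{s,\half+\epsilon}}\|\phi\|_{X^{k,\half+\epsilon}_{\pm_2}}$ (Propositions \ref{Prop.1.2}, \ref{Prop.2.2}), which forces $s\ge-\frac{1}{8}$ (resp.\ $s\ge 0$) and $s>\max(\frac{k-1}{2},k-\frac{3}{2})$; and (iv) the bilinear estimates for the Yukawa terms $\phi\psi$ and $|\psi|^2$ (Propositions \ref{Prop.1.3}, \ref{Prop.2.3}). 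In the paper these are established by splitting the product of the two Klein--Gordon factors (or the Schr\"odinger factors) via the elementary modulation inequality (\ref{1.1}) and then invoking the d'Ancona--Foschi--Selberg wave-Sobolev product estimates (Proposition \ref{AFS}) together with Schr\"odinger--wave bilinear estimates from the author's earlier work. Without carrying out this program --- or an equivalent one --- the specific thresholds in the theorem are unjustified, so the proposal as it stands has a genuine gap: it is a plan, not an argument.
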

{\bf Remark:} The solution depends continuously on the initial data and persistence of higher regularity holds. This is a standard fact for solutions obtained by a contraction argument as in our case.

\begin{theorem}
\label{Theorem1.2}
(Unconditional uniqueness) If we assume $k=s+\half$ , and $ s > \frac{3}{22}$ in the case $n=2$ and $s > \half$ in the case $n=3$ then there exists a unique solution of (\ref{0.1}),(\ref{0.2}),(\ref{0.3}) in the space $$ \psi \in C^0([0,T],H^s) \, , \, \phi \in C^0([0,T],H^k) \cap C^1([0,T],H^{k-1}) \, . $$
\end{theorem}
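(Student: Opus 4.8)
\emph{Proof strategy.} The existence of a solution in the asserted class is already contained in Theorem \ref{Theorem}: under the hypotheses $k=s+\half$ together with $s>\frac{3}{22}$ (resp. $s>\half$) the conditions of Theorem \ref{Theorem} are met, and the solution it produces lies in $\psi\in C^0([0,T],H^s)$, $\phi\in C^0([0,T],H^k)\cap C^1([0,T],H^{k-1})$. Hence only the \emph{uniqueness} assertion is new, and the task is to show that \emph{every} solution in this natural class coincides with the Bourgain-space solution of Theorem \ref{Theorem}. The plan is to prove that any natural-class solution automatically belongs, on a short time interval, to the BKM spaces $\psi\in X^{s,\half+}$, $\phi\in X^{k,\half+}_++X^{k,\half+}_-$, $\partial_t\phi\in X^{k-1,\half+}_++X^{k-1,\half+}_-$; once this membership is secured, the uniqueness part of Theorem \ref{Theorem} forces agreement, and a continuation argument propagates it to all of $[0,T]$.

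\emph{Reduction and starting point.} As in the definition of the $X^{k,b}_\pm$ spaces I write the Klein--Gordon field in first order form, $\phi=n_++n_-$ with $n_\pm=\half\big(\phi\pm (i\angles{D})^{-1}\partial_t\phi\big)$ and $\angles{D}=(1-\Delta)^{\half}$, so that each $n_\pm$ solves $i\partial_t n_\pm\mp\angles{D}n_\pm=\mp\half\angles{D}^{-1}(-2\abs{\psi}^2\phi+\abs{\psi}^2)$ and $n_\pm\in C^0([0,T],H^k)$, using $\partial_t\phi\in C^0([0,T],H^{k-1})$. The decisive elementary observation is that a function continuous in time with values in a Sobolev space is square integrable in time, i.e.
\begin{equation*}
C^0([0,T],H^s)\subset L^2([0,T],H^s)=X^{s,0}[0,T],
\end{equation*}
and likewise $n_\pm\in X^{k,0}_\pm[0,T]$. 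Thus the solution is, with no extra hypothesis, already in the BKM scale at the regularity level $b=0$, and the whole problem is to raise the index $b$ from $0$ past $\half$.

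\emph{Bootstrap via the Duhamel formulas.} Writing Duhamel's formula for \eqref{0.1} and for the two first order equations above, the homogeneous parts $e^{it\Delta}\psi_0$ and $e^{\pm it\angles{D}}n_\pm(0)$ lie in $X^{s,\half+}[0,T]$ resp. $X^{k,\half+}_\pm[0,T]$ for data in $H^s$ resp. $H^k$. For the inhomogeneous parts I use the standard time-localized estimate that, for $-\half<b'\le 0$ and $\half<b<b'+1$,
\begin{equation*}
\Big\|\int_0^t e^{i(t-t')\Delta}F(t')\,dt'\Big\|_{X^{s,b}[0,T]}\lesssim T^{\,1-b+b'}\,\|F\|_{X^{s,b'}[0,T]},
\end{equation*}
together with its $\pm$ analogue. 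Everything therefore reduces to multilinear estimates bounding the nonlinearities $F_\psi=\phi^2\psi-\phi\psi$ and $F_\phi=-2\abs{\psi}^2\phi+\abs{\psi}^2$ in the dual spaces $X^{s,-\half+}$ resp. $X^{k-1,-\half+}_\pm$ by products of the $X$-norms of $\psi$ and $n_\pm$ \emph{whose $b$-indices have been pushed strictly below $\half$} (in the most demanding step down to $0$), so that, starting from the automatic membership at $b=0$ and using the gain $T^{\,1-b+b'}>0$, a finite iteration carries the solution up to $b=\half+$. These reduced-index estimates are to be assembled from the Schr\"odinger bilinear/trilinear estimates of \cite{P1} and the Klein--Gordon bilinear estimates of \cite{AFS}, the products being split according to the two characteristic surfaces via $\phi=n_++n_-$.

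\emph{Main obstacle and conclusion.} The crux is precisely the passage to $b$-indices below $\half$: the classical bilinear estimates are stated with $b>\half$ and are self-improving there, whereas here at least one, and ultimately every, input factor must be controlled with $b\le\half-\epsilon$. This costs spatial derivatives, and balancing the worst interactions --- the cubic term with two Klein--Gordon factors in $F_\psi$, and the two-Schr\"odinger term $\abs{\psi}^2$ in $F_\phi$ --- is exactly what I expect to force the thresholds $s>\frac{3}{22}$ when $n=2$ and $s>\half$ when $n=3$ (with $k=s+\half$), which lie strictly above the existence thresholds of Theorem \ref{Theorem}. Granting these estimates, any natural-class solution belongs to the BKM spaces on a short interval $[0,T_1]$, so by the uniqueness in Theorem \ref{Theorem} any two natural-class solutions with the same data agree there. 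Since on $[0,T]$ the norms $\norm{\psi(t)}_{H^s}$, $\norm{\phi(t)}_{H^k}$ and $\norm{\partial_t\phi(t)}_{H^{k-1}}$ are bounded, $T_1$ may be chosen uniformly, and repeating the argument on consecutive intervals yields uniqueness on all of $[0,T]$.
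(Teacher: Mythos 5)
Your overall architecture agrees with the paper's: existence comes from Theorem \ref{Theorem}, and uniqueness is reduced to showing that every natural-class solution automatically lies in the BKM spaces of Theorem \ref{Theorem}. The gap is in the mechanism you propose for raising the temporal index $b$ from $0$ to $\half+$. Your starting point $C^0([0,T],H^s)\subset X^{s,0}[0,T]$ discards exactly the information that makes the argument work: it retains only $L^2_t H^s$ and forgets $L^\infty_t H^s$. With all three input factors controlled only in $X^{\cdot,0}=L^2_tH^\cdot$, the cubic nonlinearity cannot be placed in $X^{s,-\half+}$ at the stated spatial regularity: by the dual embedding $L^{1+}_tH^s\subset X^{s,-\half+}$ one would need the product in $L^{1+}_t$, but H\"older gives only $L^{2/3}_t$ from three $L^2_t$ factors. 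So the very first step of your $T^{1-b+b'}$-bootstrap fails, and no finite iteration starting from $b=0$ inputs gets off the ground. The constraint $b_0+b_1+b_2>\half$ in Proposition \ref{AFS} reflects the same obstruction.

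The paper's actual first step is Zhou's trick (Lemma \ref{Lemma1.5} for $n=2$, Lemma \ref{Lemma2.5} for $n=3$): use the full $L^\infty_tH^s$ and $L^\infty_tH^{s+\half}$ bounds on each factor, and Sobolev embedding plus H\"older \emph{in space only}, to place $\phi^2\psi$ in $L^2([0,T],H^l)$ with a \emph{loss of spatial regularity} ($l=3s-1$ for $n=2$, $l=3s-2$ for $n=3$, similarly for $|\psi|^2\phi$). The equation then yields $\psi\in X^{3s-1,1}[0,T]$, i.e.\ full temporal regularity $b=1$ at reduced spatial regularity, and interpolation with the free membership $\psi\in X^{s,0}[0,T]$ reaches $b=\half+$ at spatial regularity about $2s-\half$. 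For $n=3$, $s>\half$ this already lands in the class where Theorem \ref{Theorem} applies. For $n=2$ the paper needs a further two-stage refinement to reach $s>\frac{3}{22}$: a trilinear estimate at the intermediate exponent $b=\frac{5}{12}$ (Lemmas \ref{Lemma1.4} and \ref{Lemma1.6}) to upgrade $\psi$, followed by an induction raising the spatial index of $\phi_\pm$ in steps of $2\epsilon$ from $\frac{1}{4}+$ to $\frac{3}{8}+$. None of this structure is present in your proposal, and in particular the numerology $3s-1$ vs.\ $3s-2$, which is what actually produces the thresholds $\frac{3}{22}$ and $\half$, is not recoverable from the route you sketch. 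You would need to replace the $X^{s,0}$ starting point by the $L^\infty_t$-based Sobolev estimate of the nonlinearity and the ensuing interpolation.
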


\section{Preliminaries}
We transform the KGS into an equivalent first order (in t) system as follows. Defining
$$ \phi_{\pm} = \phi \pm i(-\Delta +1)^{-\half} \partial_t \phi$$
we obtain the system
\begin{align*}
i \partial_t \psi + \Delta \psi & = (\phi_+ + \phi_-)^2 \psi - (\phi_++\phi_-)\psi \\
i \partial_t \phi_{\pm} \mp (-\Delta+1)^{\half} \phi_{\pm} & = \pm(-\Delta+1)^{-\half} (-2|\psi|^2(\phi_+ + \phi_-) + |\psi|^2)
\end{align*}
with data
$$ \psi(0)=\psi_0 \in H^s \, , \, \phi_{\pm}(0) = \phi_0 \pm i(-\Delta+1)^{-\half}\phi_1 \in H^k \, . $$

Fundamental for the proof of the multilinear estimates are also the following bilnear estimates in Klainerman-Machedon spaces spaces which were proven by d'Ancona, Foschi and Selberg in the cases $n=2$ in \cite{AFS} and $n=3$ in \cite{AFS1} in a more general form which include many limit cases which we do not need.
\begin{prop}
\label{AFS}
Let $n=2$ or $n=3$. The estimate
$$\|uv\|_{X_{\pm}^{-s_0,-b_0}} \lesssim \|u\|_{X^{s_1,b_1}_{\pm_1}} \|v\|_{X^{s_2,b_2}_{\pm_2}} $$
holds, where $\pm,\pm_1,\pm_2$ denote independent signs,
provided the following conditions are satisfied:
\begin{align*}
\nonumber
& b_0,b_1,b_2 \ge 0 \\
\nonumber
& b_0 + b_1 + b_2 > \frac{1}{2}  \\
\nonumber
&s_0+s_1+s_2 > \frac{n+1}{2} -(b_0+b_1+b_2) \\
\nonumber
&s_0+s_1+s_2 > \frac{n}{2} -(b_0+b_1) \\
\nonumber
&s_0+s_1+s_2 > \frac{n}{2} -(b_0+b_2) \\
\nonumber
 &s_0+s_1+s_2 > \frac{n}{2} -(b_1+b_2)\\
\nonumber
&s_0+s_1+s_2 > \frac{n-1}{2} - b_0 \\
\nonumber
&s_0+s_1+s_2 > \frac{n-1}{2} - b_1 \\
\nonumber
&s_0+s_1+s_2 > \frac{n-1}{2} - b_2 \\
\nonumber
&s_0+s_1+s_2 > \frac{n+1}{4} \\
\nonumber
&(s_0 + b_0) +2s_1 + 2s_2 > \frac{n}{2} \\
\nonumber
&2s_0+(s_1+b_1)+2s_2 > \frac{n}{2} \\
\nonumber
&2s_0+2s_1+(s_2+b_2) > \frac{n}{2} \\
\nonumber
&s_1 + s_2 \ge 0 \\
\nonumber
&s_0 + s_2 \ge 0 \\
\nonumber
&s_0 + s_1 \ge 0 \, .
\end{align*}
\end{prop}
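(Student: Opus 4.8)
The plan is to establish the equivalent symmetric trilinear form on the Fourier side. By Plancherel and $L^2$-duality, introduce the nonnegative functions $F_i := \angles{\xi}^{s_i}\angles{\tau\pm_i|\xi|}^{b_i}\,\widehat{u_i}$, so that $\|F_i\|_{L^2}=\|u_i\|_{X^{s_i,b_i}_{\pm_i}}$, and let $F_0$ play the role of the $L^2$-normalized dual test function associated to $X^{s_0,b_0}_{\pm}$. Writing $h_i=\tau_i\pm_i|\xi_i|$ (with $\pm_0=\pm$), the asserted estimate is then equivalent to the fully symmetric bound
$$ \int_{\substack{\xi_0=\xi_1+\xi_2\\ \tau_0=\tau_1+\tau_2}} \frac{F_0(\xi_0,\tau_0)\,F_1(\xi_1,\tau_1)\,F_2(\xi_2,\tau_2)}{\angles{\xi_0}^{s_0}\angles{\xi_1}^{s_1}\angles{\xi_2}^{s_2}\angles{h_0}^{b_0}\angles{h_1}^{b_1}\angles{h_2}^{b_2}} \lesssim \prod_{i=0}^{2}\|F_i\|_{L^2}\,, $$
which renders the symmetry of the hypotheses in the three pairs $(s_i,b_i)$ transparent.

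The next step is to exploit the resonance identity. On the support of the integrand one has $h_0-h_1-h_2=\pm|\xi_0|\mp_1|\xi_1|\mp_2|\xi_2|=:\mathfrak H$, hence $\max_i\angles{h_i}\gtrsim\angles{\mathfrak H}$, so that a non-resonant frequency configuration forces a gain through one of the modulation weights. I then perform a dyadic Littlewood–Paley decomposition, localizing each factor to $|\xi_i|\sim N_i$ and $\angles{h_i}\sim L_i$ with $N_i,L_i\ge 1$ dyadic. On each block the weights are constant, so the estimate reduces to the single building-block bound
$$ \big\|(u_1)_{N_1,L_1}\,(u_2)_{N_2,L_2}\big\|_{L^2_{tx}} \lesssim C(N_*,L_*)\,\|u_1\|_{L^2}\|u_2\|_{L^2} $$
and its cyclic variants, where $C$ measures the convolution overlap of two thickened characteristic surfaces.

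These building-block constants are supplied by the classical $L^2$ bilinear/null-form estimates for the Klein–Gordon propagators, i.e. bounds on the measure of the intersection of two thickened characteristic surfaces, combined with the trivial Hölder/Strichartz bounds; they produce factors of the shape $(\min_i N_i)^{\frac{n-1}{2}}(\min_{i\ne j}L_i)^{\frac12}$, the exact exponent depending on the transversality of the surfaces, that is, on the relative sizes of the $N_i$ and on $\angles{\mathfrak H}$. Feeding these into the trilinear form and summing geometrically over $N_i,L_i$, one checks that each listed hypothesis is precisely the condition making the sum converge in one region of parameter space: $s_0+s_1+s_2>\frac{n+1}{4}$ governs the balanced high-high interaction of maximal concentration; the conditions $>\frac{n-1}{2}-b_i$ control the regime where $L_i$ dominates; the conditions $>\frac n2-(b_i+b_j)$ the regime of two large modulations; the remaining inequalities the low-output and mixed low-frequency regimes; and $s_i+s_j\ge 0$ prevents a single low frequency from being summed against a negative power.

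The main obstacle will be the exhaustive case analysis and, within it, the resonant interactions where all three spatial frequencies are comparable, $N_0\sim N_1\sim N_2$, with small mutual angles. There the transversality degenerates and the naive bilinear bound is insufficient; one must instead use the resonance identity to convert the smallness of the angle into a lower bound on some $\angles{h_i}$, thereby recovering the required decay. These parallel/resonant configurations, together with the high-high-to-low interaction carrying large output modulation, are exactly the ones that dictate the sharp combination of hypotheses, and treating them uniformly over the independent sign choices $\pm,\pm_1,\pm_2$ (which decide whether the surfaces are co- or counter-rotating) is the crux. This reproduces the estimate of d'Ancona, Foschi and Selberg, and I would follow their case decomposition in carrying out the summation.
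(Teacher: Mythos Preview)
The paper does not prove this proposition at all: it is quoted verbatim from d'Ancona, Foschi and Selberg (\cite{AFS} for $n=2$, \cite{AFS1} for $n=3$) and used as a black box throughout. So there is no ``paper's own proof'' to compare against; the only relevant question is whether your outline is a faithful sketch of the argument in those references.

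On that score, your outline is accurate in broad strokes. The reduction to a symmetric trilinear integral on the Fourier side, the resonance identity $h_0-h_1-h_2=\pm|\xi_0|\mp_1|\xi_1|\mp_2|\xi_2|$ yielding $\max_i\langle h_i\rangle\gtrsim\langle\mathfrak H\rangle$, the dyadic localisation in both frequency and modulation, and the appeal to $L^2$ bilinear convolution estimates for thickened cones are exactly the ingredients d'Ancona--Foschi--Selberg use. Your identification of which hypothesis governs which dyadic region is also essentially right.

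That said, what you have written is a proof \emph{plan}, not a proof. The substance of \cite{AFS} and \cite{AFS1} lies precisely in the ``exhaustive case analysis'' you defer, and in the sharp bilinear $L^2$ estimates (with the correct constants $C(N_*,L_*)$ in each geometric regime) that feed it. You describe the shape of those constants only loosely (``factors of the shape $(\min_i N_i)^{(n-1)/2}(\min_{i\ne j}L_i)^{1/2}$''), whereas the actual bounds depend delicately on the sign pattern and on whether the interaction is $(++)$, $(+-)$, high--high--low, etc.; getting this right for all sign choices and all frequency/modulation orderings is a nontrivial amount of work that your sketch does not carry out. Since you end by saying you ``would follow their case decomposition,'' you are effectively citing the same references the paper cites --- which is fine, but then the honest statement is simply that the proposition is quoted from \cite{AFS}, \cite{AFS1}, as the paper does.
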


\section{The case $n=2$}

We now prove the necessary bi- and trilinear estimates.
\begin{prop}
\label{Prop.1.1}
Let $ s > - \half$ , $k > \frac{3}{8}$ and $s< \min(2k-\frac{1}{4},k+\half)$ . Then the following estimate holds for sufficiently small $\epsilon > 0$ :
\begin{equation}
\label{1.0}
\|\phi_1 \phi_2 \psi\|_{X^{s,-\half+2\epsilon}} \lesssim  \|\phi_1\|_{X^{k,\half+\epsilon}_{\pm_1}} \|\phi_2\|_{X^{k,\half+\epsilon}_{\pm_2}} \|\psi\|_{X^{s,\half+\epsilon}} \, . 
\end{equation}
\end{prop}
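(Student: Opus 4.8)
The plan is to estimate the trilinear expression $\|\phi_1 \phi_2 \psi\|_{X^{s,-\half+2\epsilon}}$ by first handling the product of the two Klein-Gordon factors $\phi_1 \phi_2$ using Proposition \ref{AFS}, and then combining the resulting estimate with the Schr\"odinger factor $\psi$ via a further bilinear estimate. Concretely, I would introduce an intermediate Sobolev exponent $\sigma$ and a parameter $\beta \ge 0$ and write
\begin{equation*}
\|\phi_1 \phi_2 \psi\|_{X^{s,-\half+2\epsilon}} \lesssim \|(\phi_1\phi_2)\, \psi\|_{X^{s,-\half+2\epsilon}} \lesssim \|\phi_1\phi_2\|_{X^{\sigma,\beta}_{\pm_3}} \|\psi\|_{X^{s,\half+\epsilon}} \, ,
\end{equation*}
where the second inequality is a bilinear Schr\"odinger estimate (of the type $\|fg\|_{X^{s,-\half+}} \lesssim \|f\|_{X^{\sigma,\beta}} \|g\|_{X^{s,\half+}}$) that I would either quote from \cite{P1} or derive directly. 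I would then apply Proposition \ref{AFS} to the remaining factor $\|\phi_1\phi_2\|_{X^{\sigma,\beta}_{\pm_3}}$, matching its parameters as $-s_0 = \sigma$, $-b_0 = \beta$, and $s_1 = s_2 = k$, $b_1 = b_2 = \half + \epsilon$, so that one has to verify the list of inequalities in the proposition for this specific configuration.

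The heart of the argument is a bookkeeping optimization over the free parameters $\sigma$ and $\beta$. The requirement from Proposition \ref{AFS} that governs the low-regularity threshold in $k$ is the combination of the conditions involving $s_0 + s_1 + s_2 = -\sigma + 2k$ together with the $b$-budget; since $b_1 + b_2 = 1 + 2\epsilon$ is already large, the binding constraints will be those that trade $\sigma$ against the available smoothing. Simultaneously, the bilinear Schr\"odinger step dictates how negative $\sigma$ (and how large $\beta$) I am allowed to demand, and this is exactly where the hypotheses $s < 2k - \frac{1}{4}$ and $s < k + \half$ enter: they are precisely the balance conditions ensuring the two estimates can be chained with a consistent choice of $\sigma$. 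I expect to set $\sigma = s$ (or a nearby value) and to verify that the stated ranges of $s$ and $k$ make every inequality in Proposition \ref{AFS} strict, with the small $\epsilon$ absorbing the endpoint losses.

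The main obstacle, as I see it, will be the bilinear Schr\"odinger estimate that couples the Klein-Gordon product to $\psi$: unlike the fully dispersive Klein-Gordon bilinear estimate of Proposition \ref{AFS}, this step involves one Schr\"odinger factor and one factor whose time-frequency weight is of wave type, so the resonance geometry is genuinely mixed. I would handle this by a Fourier-analytic case distinction according to whether the output modulation or one of the input modulations is largest, reducing matters in each regime to either an $L^4$-type Strichartz bound for the Schr\"odinger equation in $\R^2$ or to a standard dyadic summation after using $\langle \tau + |\xi|^2\rangle$ and $\langle \tau \pm |\xi|\rangle$ weights to gain the needed derivatives. The condition $k > \frac{3}{8}$ should emerge as the sharp threshold in dimension two for which this mixed bilinear interaction, combined with the AFS estimate for $\phi_1\phi_2$, closes with room to spare for the $\epsilon$-loss.
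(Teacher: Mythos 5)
Your two-step architecture -- first place $\phi_1\phi_2$ in a wave-Sobolev space $X^{\sigma,\beta}_{\pm}$ via Proposition \ref{AFS}, then couple with $\psi$ through a mixed Schr\"odinger--wave bilinear estimate quoted from \cite{P1} -- is exactly the paper's strategy (Lemmas \ref{Lemma1.1} and \ref{Lemma1.2}). But there is a concrete gap in your first step: you propose to apply Proposition \ref{AFS} with $-b_0=\beta$, and the second step forces $\beta$ to be close to $\half$ (the estimate from \cite{P1}, Prop.~2.2 needs the wave factor in $X^{\sigma,\half-}_{\pm}$, and with $\beta=0$ the chaining does not close at these regularities). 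That means $b_0=-\beta<0$, which violates the hypothesis $b_0\ge 0$ of Proposition \ref{AFS}; the proposition simply cannot produce a positive modulation weight on the output. The paper gets around this with the elementary inequality (\ref{1.1}), $|\tau_0\pm|\xi_0||\lesssim|\tau_1\pm_1|\xi_1||+|\tau_2\pm_2|\xi_2||+\max(|\xi_1|,|\xi_2|)$, which trades the output weight $\langle\tau_0\pm|\xi_0|\rangle^{\half-}$ either for an input modulation weight or for a loss of $\half$ derivative on one factor. This transfer is not cosmetic: it is precisely what produces the intermediate exponent $\sigma=2k-\frac{5}{4}-$ (rather than the $2k-\frac{3}{4}-$ one would naively read off from the condition $s_0+s_1+s_2>\frac{n+1}{4}$), and hence the thresholds $k>\frac{3}{8}$ (from $\sigma\ge-\half$) and $s<2k-\frac{1}{4}$ (from $\sigma>s-1$) in the statement.

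Two smaller corrections. First, your guess $\sigma=s$ is not the right choice: $\sigma$ is dictated by what the wave-wave product estimate can deliver given two $H^k$ inputs, namely $\sigma=2k-\frac{5}{4}-$ for $k\le\frac{3}{4}$ and $\sigma=k-\half-$ for $k>\frac{3}{4}$; the hypotheses on $s$ then enter only through the second (Schr\"odinger--wave) step as the compatibility conditions $\sigma>s-1$. Second, you miss the case split at $k=\frac{3}{4}$: for large $k$ the low-high interaction caps the product regularity at about $k-\half$, and this branch is the source of the separate constraint $s<k+\half$. Your instinct that the mixed Schr\"odinger--wave estimate is the delicate point is reasonable, but in the paper that estimate is imported wholesale from \cite{P1}, Prop.~2.2; the genuinely new work for this proposition is the modulation bookkeeping in Lemma \ref{Lemma1.1}, which your proposal elides.
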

We need the following lemmas for its proof.
\begin{lemma}
\label{Lemma1.1}
The following estimates hold for $ k > \frac{3}{8}$ :
\begin{equation}
\label{a}
\|\phi_1 \phi_2\|_{X^{2k-\frac{5}{4}-,\half-}_{\pm}} \lesssim \|\phi_1\|_{X^{k,\half+\epsilon}_{\pm_1}} \|\phi_2\|_{X^{k,\half+\epsilon}_{\pm_2}} \, ,
\end{equation}
if $k \le \frac{3}{4}$ , and
\begin{equation}
\label{b}
\|\phi_1 \phi_2\|_{X^{k-\half-,\half-}_{\pm}} \lesssim \|\phi_1\|_{X^{k,\half+\epsilon}_{\pm_1}} \|\phi_2\|_{X^{k,\half+\epsilon}_{\pm_2}} \, ,
\end{equation}
if $k > \frac{3}{4}$ .
\end{lemma}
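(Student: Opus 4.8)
The plan is to deduce both \eqref{a} and \eqref{b} from the bilinear estimate of Proposition \ref{AFS}. The one point where Proposition \ref{AFS} does not apply verbatim is that the target space $X^{\cdot\,,\,\half-}_{\pm}$ carries a \emph{positive} modulation exponent $\half-$, whereas Proposition \ref{AFS} only produces output modulation index $-b_0\le 0$. So the first step is to trade this output weight against the input modulations via the resonance identity. Writing $\sigma_0=\tau\pm|\xi|$, $\sigma_j=\tau_j\pm_j|\xi_j|$ $(j=1,2)$ with $\xi=\xi_1+\xi_2$, $\tau=\tau_1+\tau_2$, one has
$$ \sigma_0-\sigma_1-\sigma_2 \;=\; \pm|\xi|\mp_1|\xi_1|\mp_2|\xi_2| \;=:\; R(\xi_1,\xi_2), $$
a function of the spatial frequencies alone. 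Since $\angles{\sigma_0}\lesssim \angles{\sigma_1}+\angles{\sigma_2}+\angles{R}$ and $t\mapsto t^{\half-}$ is subadditive, we get $\angles{\sigma_0}^{\half-}\lesssim \angles{\sigma_1}^{\half-}+\angles{\sigma_2}^{\half-}+\angles{R}^{\half-}$, which splits the left-hand side of \eqref{a}, \eqref{b} into three pieces, each now having output modulation index $0$.

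The two pieces carrying $\angles{\sigma_1}^{\half-}$ and $\angles{\sigma_2}^{\half-}$ are harmless: I absorb the weight into the corresponding factor, e.g. $\angles{\sigma_1}^{\half-}\phi_1\in X^{k,\epsilon'}_{\pm_1}$ for small $\epsilon'>0$, leaving an output in $X^{\cdot\,,\,0}_{\pm}$. Proposition \ref{AFS} then applies with $b_0=0$, $b_1=\epsilon'$, $b_2=\half+\epsilon$ (and symmetrically), and a short check of its conditions shows the admissible regularity exceeds $2k-1$, hence is strictly better than both $2k-\frac54$ and $k-\half$.

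The decisive piece is the one with $\angles{R}^{\half-}$. Here I use the crude pointwise bound $|R|\lesssim \max(|\xi_1|,|\xi_2|)$, valid for every choice of signs, so that $\angles{R}^{\half-}\lesssim \angles{\xi_1}^{\half-}+\angles{\xi_2}^{\half-}$; transferring, say, $\angles{\xi_1}^{\half-}$ onto $\phi_1$ means applying Proposition \ref{AFS} with the reduced regularity $s_1=k-\half+$, $s_2=k$, $b_0=0$, $b_1=b_2=\half+\epsilon$. In dimension $n=2$ the binding condition among the fifteen is $s_0+s_1+s_2>\frac{n+1}{4}=\frac34$, which forces $-s_0<2k-\frac54$ and produces exactly the exponent in \eqref{a}. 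The lower bound $k>\frac38$ enters precisely here, since the condition $(s_0+b_0)+2s_1+2s_2>\frac{n}{2}$ degenerates to an equality when $k=\frac38$ and $-s_0=2k-\frac54$.

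Finally, the case distinction $k\le\frac34$ versus $k>\frac34$ reflects a change in which condition binds in this resonance piece: when $k>\frac34$ one has $2k-\frac54>k-\half$, and the (non-strict) condition $s_0+s_1\ge 0$ with the reduced $s_1=k-\half$ becomes the active constraint, capping the output regularity at $k-\half$ and yielding \eqref{b}; the two thresholds agree at $k=\frac34$, so the two regimes match continuously. I expect the principal work to be the bookkeeping in this last piece: verifying that all the conditions of Proposition \ref{AFS} hold with strict inequalities (which is why every exponent is written with a ``$-$''), and confirming that it is indeed $s_0+s_1+s_2>\frac{n+1}{4}$ and $s_0+s_1\ge0$ that bind in the respective ranges of $k$.
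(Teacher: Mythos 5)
Your proposal is correct and follows essentially the same route as the paper: the subadditive splitting of $\langle\tau_0\pm|\xi_0|\rangle^{\half-}$ via the resonance identity is exactly the paper's inequality (\ref{1.1}), the resulting three pieces are reduced to the same bilinear estimates (modulation absorbed into an input, or regularity dropped to $k-\half+$ on one factor), and you identify the same binding conditions in Proposition \ref{AFS} ($s_0+s_1+s_2>\frac34$ for the exponent $2k-\frac54$, $(s_0+b_0)+2s_1+2s_2>1$ for $k>\frac38$, and $s_0+s_1\ge 0$ for the switch to $k-\half$ at $k=\frac34$). The only cosmetic difference is that you keep a residual modulation exponent $\epsilon'$ where the paper drops it to $0$.
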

\begin{proof}
Let $\xi_0 = \xi_1 + \xi_2$ , $\tau_0 = \tau_1 + \tau_2$ , where $\xi_j \in \R^n , \tau_j \in \R$ . The following elementary estimate holds:
\begin{align}
\nonumber
|\tau_0 \pm |\xi_0|| & =|\tau_1+\tau_2 \pm_1 |\xi_1| \pm_2 |\xi_2| \mp_1|\xi_1| \pm_2 |\xi_2| \pm |\xi_0|| \\
\label{1.1}
& \lesssim |\tau_1 \pm_1 |\xi_1|| + |\tau_2 \pm_2 |\xi_2|| + \max(|\xi_1|,|\xi_2|) \, .
\end{align}
By (\ref{1.1}) we  may reduce (\ref{a}) to the estimates
\begin{align}
\label{1.2}
\|\phi_1 \phi_2\|_{X^{2k-\frac{5}{4}-,0}_{\pm}} &\lesssim \|\phi_1\|_{X^{k-\half+,\half+\epsilon}_{\pm_1}} \|\phi_2\|_{X^{k,\half+\epsilon}_{\pm_2}} \, ,  \\
\label{1.3}
\|\phi_1 \phi_2\|_{X^{2k-\frac{5}{4}-,0}_{\pm}} &\lesssim \|\phi_1\|_{X^{k,0}_{\pm_1}} \|\phi_2\|_{X^{k,\half+\epsilon}_{\pm_2}} \, .
\end{align}

For (\ref{1.2}) we may use Prop. \ref{AFS} with parameters
$s_0 = \frac{5}{4}-2k+$ , $s_1=k-\half+$ , $s_2=k$ , $b_0=0$ , $b_1=b_2=\half+\epsilon$ , because  $s_0+s_1+s_2 = \frac{3}{4}+$ , $s_0+2s_1+2s_2 = \frac{3}{4}+2k-\half + > 1 $ for $k \ge \frac{3}{8}$ and $s_0+s_1= \frac{3}{4}-k+ > 0$ for $k \le \frac{3}{4}$ . 

For (\ref{1.3}) similarly we choose $s_0=\frac{5}{4}-2k+$ , $s_1=s_2=k$ , $b_0=b_1=0$ , $b_2=\half+\epsilon$ , so that $s_0+s_1+s_2 = \frac{5}{4}+$ , $s_1+s_2=2k \ge \frac{3}{4}$ , $s_0+s_2= \frac{5}{4}-k > 0$ . 

By (\ref{1.1}) we reduce (\ref{b}) to the estimates
\begin{align}
\label{1.4}
\|\phi_1 \phi_2\|_{X^{k-\half-,0}_{\pm}} &\lesssim \|\phi_1\|_{X^{k-\half+,\half+\epsilon}_{\pm_1}} \|\phi_2\|_{X^{k,\half+\epsilon}_{\pm_2}} \, ,  \\
\label{1.5'}
\|\phi_1 \phi_2\|_{X^{k-\half-,0}_{\pm}} &\lesssim \|\phi_1\|_{X^{k,0}_{\pm_1}} \|\phi_2\|_{X^{k,\half+\epsilon}_{\pm_2}} \, .
\end{align}

As above we use Prop. \ref{AFS}. Here we may choose for (\ref{1.4}) $s_0=\half-k+$ , $s_1=k-\half+$ , $s_2=k$ , $b_0=0$ , $b_1=b_2=\half+\epsilon$ , so that $s_0+s_1+s_2 =k+ > \frac{3}{4}$ , $s_1+s_2=2k-\half+ > 1$ and in the case (\ref{1.5'}) $s_0=\half-k+$ , $s_1=s_2=k$ , $b_0=b_1=0$, $b_2=\half+\epsilon$ , thus $s_0+s_1+s_2= k+\half+ > \frac{5}{4}$ .
\end{proof}

\begin{lemma}
\label{Lemma1.2}
For sufficiently small $\epsilon > 0$ the following estimates apply:
\begin{equation}
\label{a1}
\| |\phi|^2 \psi\|_{X^{s,-\half+2\epsilon}} \lesssim \||\phi|^2\|_{X^{2k-\frac{5}{4}-,\half-}_{\pm}} \|\psi\|_{X^{s,\half-}} \,  ,
\end{equation}
if $ k > \frac{3}{8}$ , $k \le \frac{3}{4}$ and $s > -\half$ , $s < 2k-\frac{1}{4}$ .
\begin{equation}
\label{b1}
\| |\phi|^2 \psi\|_{X^{s,-\half+2\epsilon}} \lesssim \||\phi|^2\|_{X^{k-\half-,\half-}_{\pm}} \|\psi\|_{X^{s,\half-}} \, ,
\end{equation}
if $ k >  \frac{3}{4}$ and $s > -\half$ , $s < k+\half$ .
\end{lemma}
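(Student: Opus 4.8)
The plan is to establish both \eqref{a1} and \eqref{b1} by duality, reducing each to a trilinear space--time integral in which the algebraic relation between the Schr\"odinger and Klein--Gordon dispersion relations can be exploited. Writing $w=|\phi|^2$, which the statement requires us to control only through its norm in $X^{m,\half-}_{\pm}$ (with $m=2k-\frac{5}{4}-$ in the first case and $m=k-\half-$ in the second), both inequalities take the form $\|w\psi\|_{X^{s,-\half+2\epsilon}}\lesssim\|w\|_{X^{m,\half-}_{\pm}}\|\psi\|_{X^{s,\half-}}$. Dualizing the output norm against a test function $\chi\in X^{-s,\half-2\epsilon}$, it suffices to bound the form $\int w\,\psi\,\overline{\chi}\,dx\,dt$ by the product $\|w\|_{X^{m,\half-}_{\pm}}\|\psi\|_{X^{s,\half-}}\|\chi\|_{X^{-s,\half-2\epsilon}}$, where on the Fourier side the frequencies obey $\xi=\xi_0+\xi_1$ and $\tau=\tau_0+\tau_1$, with $(\xi_0,\tau_0)$ attached to $w$ and $(\xi_1,\tau_1)$ to $\psi$.

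The central tool is the resonance identity
\[
(\tau+|\xi|^2)-(\tau_0\pm|\xi_0|)-(\tau_1+|\xi_1|^2)=|\xi_0|^2+2\,\xi_0\cdot\xi_1\mp|\xi_0|,
\]
which shows that the largest of the three modulations $\langle\tau+|\xi|^2\rangle$, $\langle\tau_0\pm|\xi_0|\rangle$, $\langle\tau_1+|\xi_1|^2\rangle$ is $\gtrsim\langle|\xi_0|^2+2\,\xi_0\cdot\xi_1\mp|\xi_0|\rangle$. I would split the integration into the Schr\"odinger-dominant region $|\xi_0|\lesssim|\xi_1|$ and the wave-dominant region $|\xi_0|\gtrsim|\xi_1|$. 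In the wave-dominant region one has $|\xi|\sim|\xi_0|$ and the right-hand side above is $\sim|\xi_0|^2$, so the product of modulation weights forces one factor to be $\gtrsim\langle\xi_0\rangle^{1-}$; this is exactly the gain needed to absorb the spatial loss $\langle\xi\rangle^{s}\langle\xi_0\rangle^{-m}\sim\langle\xi_0\rangle^{s-m}$, since the hypotheses $s<2k-\frac14$ and $s<k+\half$ are precisely equivalent to $s-m<1$. Having spent this modulation, I would estimate the remaining two factors in $L^2_{tx}$ by H\"older together with the Strichartz embeddings $X^{0,\half+}\hookrightarrow L^4_{tx}$ for the Schr\"odinger part and $X^{\frac14,\half+}_{\pm}\hookrightarrow L^4_{tx}$ for the Klein--Gordon part in $n=2$.

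In the Schr\"odinger-dominant region I would control the spatial weights using $|\xi|\lesssim|\xi_1|$, so that $\langle\xi\rangle^{s}\langle\xi_1\rangle^{-s}$ is harmless for $s\ge 0$, while for $-\half<s<0$ the low-output-frequency part is kept in check by the lower bound on $s$; one then distributes the $L^4_{tx}$ norms by H\"older, the at most $\frac12-$ derivatives falling on the low-frequency Klein--Gordon factor being absorbed by its Strichartz norm together with $k>\frac38$, respectively $k>\frac34$. The two cases \eqref{a1} and \eqref{b1} are treated identically; only the value of $m$ and the corresponding upper threshold for $s$ differ, so no separate argument is required.

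The main obstacle is the region where the Klein--Gordon factor carries negative regularity (small $k$, hence $m<0$) \emph{and} the interaction is nearly resonant, i.e.\ $|\xi_0|\sim|\xi_1|$ with all three modulations small, together with the related low-output-frequency range $\xi_0\approx-\xi_1$ when $s<0$. There the resonance identity yields no gain, and the required derivatives must be recovered from the bilinear H\"older--Strichartz structure alone, exploiting the transversality of the cone and the paraboloid. Because every hypothesis is used with room to spare ($s$ strictly below its threshold and the modulation exponents $\half-$ rather than $\half+$), the proof must carefully track the small positive powers arising from the ``$-$'' in $\half-$ and the ``$+$'' in $2\epsilon$, so that they can be summed over the dyadic frequency and modulation decompositions.
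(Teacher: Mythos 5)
Your proposal sets out to prove the estimate from first principles, whereas the paper disposes of Lemma \ref{Lemma1.2} in two lines: it is a direct application of Prop.~2.2 of \cite{P1} (a previously established Schr\"odinger--wave bilinear estimate of the form $\|w\psi\|_{X^{s,-\half+2\epsilon}}\lesssim\|w\|_{X^{\sigma,\half-}_{\pm}}\|\psi\|_{X^{s,\half-}}$ valid under explicit conditions on $\sigma$ and $s$), with $\sigma=2k-\frac{5}{4}-$ in case \eqref{a1} and $\sigma=k-\half-$ in case \eqref{b1}; the hypotheses $k>\frac{3}{8}$, $s<2k-\frac{1}{4}$ (resp.\ $k>\frac34$, $s<k+\half$) are exactly what is needed to check $\sigma\ge-\half$ and $\sigma>s-1$ (resp.\ $\sigma>\frac14$ and $\sigma>s-1$). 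So the real content of the lemma is the parameter bookkeeping, and all the hard analysis is outsourced.

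Your from-scratch route has a genuine gap precisely where you say the difficulty lies. In the region $|\xi_0|\sim|\xi_1|$ with $|\xi_0|^2+2\xi_0\cdot\xi_1\mp|\xi_0|$ small and all three modulations small (and likewise the low-output-frequency region $\xi_0\approx-\xi_1$ when $s<0$, where $\langle\xi\rangle^{s}\langle\xi_1\rangle^{-s}\langle\xi_0\rangle^{-m}$ grows for $m<0$), the resonance identity gives nothing, and you state only that the loss ``must be recovered from the bilinear H\"older--Strichartz structure alone, exploiting the transversality of the cone and the paraboloid.'' That is a description of the problem, not a proof; whether the specific thresholds $k>\frac38$, $s>-\half$ survive is decided exactly there, and nothing in your argument establishes it. A second, related defect: all your available modulation exponents are $\half-$ or $\half-2\epsilon$, but the Strichartz embeddings you invoke ($X^{0,\half+}\hookrightarrow L^4_{tx}$, $X^{\frac14,\half+}_{\pm}\hookrightarrow L^4_{tx}$) require $b>\half$; at $b=\half-$ these embeddings fail at the endpoint and must be replaced by interpolated versions costing extra derivatives, which you have not budgeted for. (Also, after spending the largest modulation on one factor you place \emph{that} factor in $L^2_{tx}$ and the other two in $L^4_{tx}$, not ``the remaining two factors in $L^2_{tx}$.'') The honest fix is either to carry out the resonant-region analysis in full or to do what the paper does and quote \cite{P1}, Prop.~2.2.
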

\begin{proof}
This follows from \cite{P1}, Prop. 2.2. , where in the case (\ref{a1}) we have $\sigma = 2k-\frac{5}{4}-$, so that $\sigma \ge - \half$ for $k > \frac{3}{8}$ and $\sigma > s-1$ for $s < 2k-\frac{1}{4}$ . In the case (\ref{b1}) we choose $\sigma = k-\half-$ , so that $\sigma > s-1$ for $s < k+\half$ , and $\sigma > \frac{1}{4}$ .
\end{proof}

\begin{proof}[Proof of Prop. \ref{Prop.1.1}]
The proof follows immediately by Lemma \ref{Lemma1.1} and Lemma \ref{Lemma1.2}.
\end{proof}

\begin{prop}
\label{Prop.1.2}
Let $ s \ge - \frac{1}{8}$ , $ k > \frac{3}{8}$ and $s > \max(\frac{k-1}{2},k-\frac{3}{2})$ . The following estimate applies:
\begin{equation}
\label{1.4'}
\| |\psi|^2 \phi \|_{X^{k-1,-\half+2\epsilon}_{\pm}} \lesssim \|\psi\|^2_{X^{s,\half+\epsilon}} \|\phi\|_{X^{k,\half+\epsilon}_{\pm_2}} \, .
\end{equation}
\end{prop}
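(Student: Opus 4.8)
The plan is to estimate the trilinear form $\||\psi|^2\phi\|_{X^{k-1,-\half+2\epsilon}_{\pm}}$ by first splitting off the Schrödinger factor $|\psi|^2 = \psi \overline{\psi}$ and treating it as a single object, then pairing it with the Klein-Gordon factor $\phi$ via Proposition~\ref{AFS}. The structure mirrors the proof of Proposition~\ref{Prop.1.1}: one intermediate product estimate (here for $|\psi|^2$, living in some Bourgain space for the Schrödinger flow) followed by one bilinear Klainerman-Machedon estimate pairing that product against the Klein-Gordon wave. The main subtlety is bookkeeping the frequency and modulation weights so that the losses are absorbed into the $\epsilon$'s and the hypotheses $s \ge -\frac{1}{8}$, $k > \frac{3}{8}$, $s > \max(\frac{k-1}{2},k-\frac{3}{2})$ are each used exactly where needed.

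\emph{Step 1 (reduce the two Schrödinger factors).} First I would control $\||\psi|^2\|$ in an appropriate space of the form $X^{\sigma,\beta}$ adapted to the Schrödinger group; since $|\psi|^2 = \psi\,\overline{\psi}$ and the conjugate has Fourier support reflected, this is effectively a product $\psi_1\psi_2$ of two Schrödinger-type factors. I expect to invoke a companion bilinear estimate from \cite{P1} (analogous to the role of Lemma~\ref{Lemma1.2} in the previous proof), gaining regularity $\sigma$ in the product at the cost of modulation weight $\half-$ on each input. The admissible exponent $\sigma$ will be dictated by the Strichartz/bilinear Schrödinger theory in dimension $n=2$, and the lower bound on $s$ (in particular $s \ge -\frac{1}{8}$) should be exactly what guarantees that $|\psi|^2$ lands in a space with enough regularity to proceed.

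\emph{Step 2 (pair the product against $\phi$ via Proposition~\ref{AFS}).} With $|\psi|^2 \in X^{\sigma,\beta}$ in hand, I would estimate $\||\psi|^2\phi\|_{X^{k-1,-\half+2\epsilon}_{\pm}}$ by an application of the bilinear Klainerman-Machedon estimate. Here one factor is the Schrödinger-frequency object $|\psi|^2$ and the other is the Klein-Gordon wave $\phi \in X^{k,\half+\epsilon}_{\pm_2}$; because the dispersion surfaces for the Schrödinger and (half-)wave flows are transversal, the relevant bilinear estimate should hold with room to spare. I would set $s_0 = 1-k+$, match $s_1,s_2$ to $\sigma$ and $k$, take $b_0 = \half-2\epsilon$ and $b_2 = \half+\epsilon$, and then verify each of the inequalities in the hypothesis list of Proposition~\ref{AFS}. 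The conditions $s > \frac{k-1}{2}$ and $s > k-\frac{3}{2}$ are what I expect to see emerge from the regularity-balance inequalities $s_0+s_1+s_2 > \tfrac{n+1}{4}$ and $s_1+s_2 \ge 0$ type constraints.

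\emph{The main obstacle} I anticipate is the interplay between the two reduction steps: the regularity $\sigma$ won by the bilinear Schrödinger estimate in Step~1 is not freely choosable, and it must simultaneously be large enough that the Proposition~\ref{AFS} hypotheses close in Step~2 yet not demand more from the two copies of $\psi$ than $s$ provides. The tightest point should be the low-frequency (or resonant) interaction where $|\psi|^2$ has small frequency but the modulation gains are marginal, forcing the $\epsilon$-losses from the $\half-$ weights to be tracked carefully; I expect precisely the constraint $s > \max(\frac{k-1}{2},k-\frac{3}{2})$ to be the binding condition ensuring the sum $s_0+s_1+s_2$ clears the threshold $\frac{n+1}{4} = \frac{3}{4}$ with the transversality (wave-against-Schrödinger) bilinear gain. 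Once those inequalities are checked, the estimate \eqref{1.4'} follows by composing the two steps, exactly as in the proof of Proposition~\ref{Prop.1.1}.
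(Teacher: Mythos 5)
Your two-step skeleton (first a product estimate for $|\psi|^2$, then a bilinear estimate pairing it against $\phi$ via Prop.~\ref{AFS}) is indeed the architecture of the paper's proof, which splits \eqref{1.4'} into Lemma~\ref{Lemma1.3} and Lemma~\ref{Lemma1.4'}. But as described, your execution has a gap at both steps, and the two problems are linked. First, the space in which $|\psi|^2$ can be placed is not a Bourgain space adapted to the Schr\"odinger flow but a Klainerman--Machedon space $X^{\sigma,-\half+2\epsilon}_{\pm_1}$ with a \emph{negative} modulation exponent (with $\sigma=\frac14-$ for $k\le\frac34$, giving the condition $s\ge-\frac18$, and $\sigma=k-\half-$ for $k>\frac34$, giving $s>\max(\frac{k-1}{2},k-\frac32)$ --- so these hypotheses are consumed entirely in the $|\psi|^2$ estimate via \cite{P1}, Prop.~2.4, not in the Prop.~\ref{AFS} step as you predict). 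The negative weight is unavoidable: $\psi\overline{\psi}$ concentrates far from the light cone, so $\langle\tau\pm|\xi|\rangle^{b_1}$ with $b_1\ge 0$ would cost derivatives and the numerology would not close at $s=-\frac18$. Second, precisely because $b_1=-\half+2\epsilon<0$, you cannot feed $|\psi|^2$ directly into Prop.~\ref{AFS}, whose hypotheses require $b_0,b_1,b_2\ge 0$; your proposed direct application with $s_0=1-k+$, $b_0=\half-2\epsilon$, $b_2=\half+\epsilon$ leaves $b_1$ unspecified and inadmissible. The paper circumvents this by dualizing \eqref{a2} into
\begin{equation*}
\|\phi w\|_{X^{-\frac{1}{4}+,\half-2\epsilon}_{\pm_1}} \lesssim \|\phi\|_{X^{k,\half+\epsilon}_{\pm_2}} \|w\|_{X^{1-k,\half-2\epsilon}_{\pm}} \, ,
\end{equation*}
which moves the troublesome slot into the output, and then uses the modulation triangle inequality \eqref{1.1} to distribute the resulting positive output weight $\half-2\epsilon$ onto the inputs (or convert it into a derivative loss $\max(|\xi_1|,|\xi_2|)^{\half-2\epsilon}$), reducing to four estimates each with $b_0=0$ to which Prop.~\ref{AFS} genuinely applies. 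Without the duality-plus-\eqref{1.1} reduction and without committing to the negative modulation weight on $|\psi|^2$, the argument as you outline it does not close; you would also need the case split at $k=\frac34$ that determines which intermediate regularity $\sigma$ is used.
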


This is a consequence of the following two lemmas.
\begin{lemma}
\label{Lemma1.3}
The following estimates apply:
\begin{equation}
\label{a2}
\| |\psi|^2 \phi \|_{X^{k-1,-\half+2\epsilon}_{\pm}} \lesssim \| |\psi|^2\|_{X^{\frac{1}{4}-,-\half+2\epsilon}_{\pm_1}} \|\phi\|_{X^{k,\half+\epsilon}_{\pm_2}} \, ,
\end{equation}
if $\frac{3}{8} < k \le \frac{3}{4}$ , and
\begin{equation}
\label{b2}
\| |\psi|^2 \phi \|_{X^{k-1,-\half+2\epsilon}_{\pm}} \lesssim \| |\psi|^2\|_{X^{k-\half-,-\half+2\epsilon}_{\pm_1}} \|\phi\|_{X^{k,\half+\epsilon}_{\pm_2}} \, ,
\end{equation} 
if $k > \frac{3}{4}$ .
\end{lemma}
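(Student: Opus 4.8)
My plan is to treat $u:=|\psi|^2$ as a single function, so that both (\ref{a2}) and (\ref{b2}) become instances of one bilinear estimate,
\[ \|u\phi\|_{X^{k-1,-\half+2\epsilon}_{\pm}} \lesssim \|u\|_{X^{\sigma,-\half+2\epsilon}_{\pm_1}} \|\phi\|_{X^{k,\half+\epsilon}_{\pm_2}}, \]
with $\sigma=\frac14-$ in the range $\frac38<k\le\frac34$ (case (\ref{a2})) and $\sigma=k-\half-$ in the range $k>\frac34$ (case (\ref{b2})). I would reduce each of these to Prop.\ \ref{AFS}. The one genuine obstruction is that Prop.\ \ref{AFS} demands nonnegative modulation exponents $b_0,b_1,b_2\ge0$, whereas here the factor $u$ carries the \emph{negative} exponent $-\half+2\epsilon$; the heart of the argument is to remove it before invoking Prop.\ \ref{AFS}.

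To do so I pass to the Fourier side. Writing $\xi_0=\xi_1+\xi_2$, $\tau_0=\tau_1+\tau_2$ and $\lambda_0=\tau_0\pm|\xi_0|$, $\lambda_1=\tau_1\pm_1|\xi_1|$, $\lambda_2=\tau_2\pm_2|\xi_2|$, the negative modulation exponent of $u$ appears in the \emph{numerator} as $\langle\lambda_1\rangle^{\half-2\epsilon}$; concretely, replacing $u$ by the function $u_\sharp$ with $|\widehat{u_\sharp}|=\langle\xi\rangle^{-\sigma}|f|$, where $f=\langle\xi\rangle^{\sigma}\langle\lambda_1\rangle^{-\half+2\epsilon}\hat u$, one has $\|u_\sharp\|_{X^{\sigma,0}_{\pm_1}}=\|u\|_{X^{\sigma,-\half+2\epsilon}_{\pm_1}}$ and $|\hat u|=\langle\lambda_1\rangle^{\half-2\epsilon}|\widehat{u_\sharp}|$. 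Exactly as in the derivation of (\ref{1.1}), the identity $\lambda_0-\lambda_1-\lambda_2=\pm|\xi_0|\mp_1|\xi_1|\mp_2|\xi_2|$ gives, since all signs are independent,
\[ \langle\lambda_1\rangle^{\half-2\epsilon} \lesssim \langle\lambda_0\rangle^{\half-2\epsilon}+\langle\lambda_2\rangle^{\half-2\epsilon}+\max(|\xi_1|,|\xi_2|)^{\half-2\epsilon}. \]

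Inserting this bound splits the estimate into three admissible pieces. In the first, $\langle\lambda_0\rangle^{\half-2\epsilon}$ cancels the output modulation weight, and it remains to prove $\|u_\sharp\phi\|_{X^{k-1,0}_{\pm}}\lesssim\|u_\sharp\|_{X^{\sigma,0}_{\pm_1}}\|\phi\|_{X^{k,\half+\epsilon}_{\pm_2}}$, i.e.\ Prop.\ \ref{AFS} with $b_0=b_1=0$, $b_2=\half+\epsilon$. In the second, $\langle\lambda_2\rangle^{\half-2\epsilon}$ is absorbed into the weight $\langle\lambda_2\rangle^{\half+\epsilon}$ of $\phi$, leaving the nonnegative exponents $b_0=\half-2\epsilon$, $b_1=0$, $b_2=3\epsilon$. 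In the third, $\max(|\xi_1|,|\xi_2|)^{\half-2\epsilon}$ produces a derivative loss of $\half-2\epsilon$ falling either on $u$ (if $|\xi_1|\ge|\xi_2|$, lowering its regularity to $\sigma-\half+2\epsilon$) or on $\phi$ (if $|\xi_2|\ge|\xi_1|$, lowering $k$ to $k-\half+2\epsilon$); in both subcases $b_0=\half-2\epsilon$, $b_1=0$, $b_2=\half+\epsilon$ are nonnegative. Each of these finitely many estimates has $s_0=1-k$, the spatial exponents $s_1,s_2$ just listed, and follows from Prop.\ \ref{AFS}.

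The hard part will be the endpoint bookkeeping in the derivative-loss case (the third) with the high frequency on $u$. For (\ref{a2}) the binding AFS conditions there are $s_0+s_1+s_2>\frac{n+1}{4}=\frac34$ and $s_0+s_1\ge0$, which with $s_1=\sigma-\half+2\epsilon=-\frac14+2\epsilon-$ read $\frac34+2\epsilon->\frac34$ and $\frac34-k\ge0$; this is precisely why the choice $\sigma=\frac14-$ together with $k\le\frac34$ is forced, while for $k>\frac34$ the choice $\sigma=k-\half-$ keeps all sums comfortably above $\frac34$. The remaining thirteen conditions of Prop.\ \ref{AFS} are easily checked to be non-binding in every case. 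Thus the real work is to confirm that, at $k=\frac34$ and at the regularity $s_1=-\frac14$, the $\epsilon$'s generated by the $\pm$/$\mp$ notation combine so that the two borderline inequalities hold \emph{strictly}; this is routine but must be done at exactly these endpoints.
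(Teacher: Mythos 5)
Your argument is correct and is essentially the paper's: the paper also reduces both estimates to Prop.~\ref{AFS} by distributing the problematic modulation weight $\langle\tau\pm_1|\xi|\rangle^{\half-2\epsilon}$ via the triangle inequality (\ref{1.1}), arriving at the same four reduced estimates with the same exponents (including the $b_2=3\epsilon$ case and the two derivative-loss cases), and the constraint $k\le\frac34$ enters through the same condition $s_0+s_1\ge0$. The only cosmetic difference is that the paper first passes to the dual estimate $\|\phi w\|_{X^{-\frac14+,\half-2\epsilon}_{\pm_1}}\lesssim\|\phi\|_{X^{k,\half+\epsilon}_{\pm_2}}\|w\|_{X^{1-k,\half-2\epsilon}_{\pm}}$ and then applies (\ref{1.1}) to the output weight, whereas you apply (\ref{1.1}) directly to the factor $|\psi|^2$; the two reductions correspond under duality.
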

\begin{proof}
Estimate (\ref{a2}) is by duality equivalent to
$$ \|\phi w\|_{X^{-\frac{1}{4}+,\half-2\epsilon}_{\pm_1}} \lesssim \|\phi\|_{X^{k,\half+\epsilon}_{\pm_2}} \|w\|_{X^{1-k,\half-2\epsilon}_{\pm}} \, . $$
By (\ref{1.1}) this reduces to the following estimates:
\begin{align*}
\|\phi w\|_{X^{-\frac{1}{4}+,0}_{\pm_1}} &\lesssim \|\phi\|_{X^{k-\half+2\epsilon,\half+\epsilon}_{\pm_2}} \|w\|_{X^{1-k,\half-2\epsilon}_{\pm}} \, , \\
\|\phi w\|_{X^{-\frac{1}{4}+,0}_{\pm_1}} &\lesssim \|\phi\|_{X^{k,\half+\epsilon}_{\pm_2}} \|w\|_{X^{\half-k+2\epsilon,\half-2\epsilon}_{\pm}} \, , \\
\|\phi w\|_{X^{-\frac{1}{4}+,0}_{\pm_1}} &\lesssim \|\phi\|_{X^{k,3\epsilon}_{\pm_2}} \|w\|_{X^{1-k,\half-2\epsilon}_{\pm}} \, , \\
\|\phi w\|_{X^{-\frac{1}{4}+,0}_{\pm_1}} &\lesssim \|\phi\|_{X^{k,\half+\epsilon}_{\pm_2}} \|w\|_{X^{1-k,0}_{\pm}} \, .
\end{align*}
This follows by use of Prop. \ref{AFS}. One easily checks the necessary conditions. The assumption $k \le \frac{3}{4}$ is obviously necessary for the second estimate to hold.

Concerning (\ref{b2}) by similar arguments using (\ref{1.1}) we reduce to 4 estimates, e.g. we need
$$ \|\phi w\|_{X^{\half-k+,0}_{\pm_1}} \lesssim \|\phi\|_{X^{k-\half+2\epsilon,\half + \epsilon}_{\pm_2}} \|w\|_{X^{1-k,\half-2\epsilon}_{\pm}} \, , $$
which by Prop. \ref{AFS} with parameters $s_0=k-\half-$ , $s_1=k-\half+2\epsilon$ , $s_2=1-k$ , $b_0=0$ , $b_1=\half+\epsilon$ , $b_2=\half-2\epsilon$ requires $s_0+s_1+s_2 \ge k > \frac{3}{4}$ . Similarly the other estimates can be proven.
\end{proof}

\begin{lemma}
\label{Lemma1.4'}
The following estimates apply:
\begin{equation}
\label{a3}
\| |\psi|^2\|_{X^{\frac{1}{4}-,-\half+}_{\pm}} \lesssim \|\psi\|^2_{X^{s,\half-}} 
\end{equation}
for $s \ge - \frac{1}{8}$ , and
\begin{equation}
\label{b3}
\| |\psi|^2\|_{X^{k-\half-,-\half+}_{\pm}} \lesssim \|\psi\|^2_{X^{s,\half-}} 
\end{equation}
for $ s >\max(\frac{k-1}{2},k-\frac{3}{2})$ and $k > \frac{3}{4}$ .
\end{lemma}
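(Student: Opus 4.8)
The plan is to recognize (\ref{a3}) and (\ref{b3}) as two instances of the single bilinear estimate $\| |\psi|^2\|_{X^{\sigma,-\half+}_{\pm}} \lesssim \|\psi\|_{X^{s,\half-}}^2$, and to determine the conditions on $(\sigma,s)$ that make it hold. Checking scaling and the borderline interactions in $n=2$ singles out the two requirements $\sigma < 2s+\half$ and $\sigma < s+1$. For (\ref{a3}) the value $\sigma = \frac14-$ turns the first into $s > -\frac18$ (met at the stated endpoint by the harmless loss in $\frac14-$) while the second is automatic; for (\ref{b3}) the value $\sigma = k-\half-$ turns them into $s > \frac{k-1}{2}$ and $s > k - \frac32$, i.e.\ exactly $s > \max(\frac{k-1}{2}, k-\frac32)$. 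Since this is precisely the $|\psi|^2$ estimate already appearing in the classical Klein-Gordon-Schr\"odinger analysis, the shortest route is to quote the corresponding proposition of \cite{P1} and verify these inequalities, exactly as in the proof of Lemma \ref{Lemma1.2}.

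For a self-contained argument I would write $|\psi|^2 = \psi\overline{\psi}$ and set $\xi_0 = \xi_1 - \xi_2$, $\tau_0 = \tau_1 - \tau_2$; the conjugation leaves the two Schr\"odinger weights as $\langle\tau_1 + |\xi_1|^2\rangle^{\half-}$ and $\langle\tau_2 + |\xi_2|^2\rangle^{\half-}$. The algebraic identity
$$ \tau_0 \pm |\xi_0| = (\tau_1 + |\xi_1|^2) - (\tau_2 + |\xi_2|^2) + h \, , \qquad h := \pm|\xi_0| - \xi_0\cdot(\xi_1+\xi_2) \, , $$
(using $|\xi_1|^2 - |\xi_2|^2 = \xi_0 \cdot (\xi_1+\xi_2)$) shows that the largest of the three modulations is $\gtrsim |h|$. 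I would then pass to the dual formulation $|\langle \psi_1\overline{\psi_2}, g\rangle| \lesssim \|\psi_1\|_{X^{s,\half-}}\|\psi_2\|_{X^{s,\half-}}\|g\|_{X^{-\sigma,\half-}_{\pm}}$, where now all three modulation exponents are positive and the factor $|h|$ can be assigned to whichever weight is largest.

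The analytic core is the two-dimensional $L^4_{t,x}$ Strichartz estimate $\|\psi\|_{L^4_{t,x}} \lesssim \|\psi\|_{X^{0,\half+}}$ for the Schr\"odinger flow together with its bilinear refinement for the conjugate product $\psi_1\overline{\psi_2}$, which gains derivatives away from the resonant set $h=0$. After dyadically decomposing in the three frequencies and three modulations, I would in the non-resonant regions absorb the output weight $\langle\xi_0\rangle^{\sigma}$ into the gain coming from the dominant modulation through the identity above, and in the resonant, near-parallel region use the bilinear gain to produce the $\sigma$ derivatives directly. The slight deficit between the available modulation exponent $\half-$ and the $\half+$ demanded by the plain $L^4$ estimate is recovered from the extra room in the resonance relation; the resulting dyadic sums converge precisely when $\sigma < 2s+\half$ and $\sigma < s+1$.

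The hard part will be the resonant high$\times$high interactions, where $h$ is small and no modulation weight is large, so the entire gain of $\sigma$ derivatives must come from the sharp bilinear estimate alone. As (\ref{a3}) allows $s<0$, this is a genuine high$\times$high smoothing effect, and the value $s = -\frac18$ is its critical threshold; this is exactly why the target regularity is written $\frac14-$ rather than $\frac14$, the small loss absorbing the logarithmic divergence of the borderline dyadic sum. The same mechanism governs (\ref{b3}), where the two binding conditions $s > \frac{k-1}{2}$ and $s > k - \frac32$ reappear as the thresholds for summability.
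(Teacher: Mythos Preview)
Your approach is essentially the same as the paper's: both estimates are read off from \cite{P1}, Prop.~2.4, after checking the two binding inequalities. One point of normalisation to fix: in the paper the parameter $\sigma$ from \cite{P1}, Prop.~2.4 is the regularity on the left-hand side \emph{shifted by one} (so $\sigma=\frac{5}{4}-$ for (\ref{a3}) and $\sigma=k+\frac{1}{2}-$ for (\ref{b3})), and the stated constraints there read $\sigma<2s+\frac{3}{2}$ and $\sigma<s+2$; your inequalities $\sigma<2s+\frac{1}{2}$, $\sigma<s+1$ are the equivalent statements with $\sigma$ taken to be the left-hand regularity itself. The verification of the thresholds $s\ge-\frac18$ and $s>\max(\frac{k-1}{2},k-\frac32)$ is identical either way.
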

\begin{proof}
We apply \cite{P1}, Prop. 2.4 in the case (\ref{a3}) with parameter $\sigma = \frac{5}{4}-$ , so that $\sigma < 2s + \frac{3}{2}$ for $s \ge - \frac{1}{8}$ and $\sigma < s+2$ , and in the case (\ref{b3}) with $\sigma = k+\half-$ , thus $\sigma < 2s+\frac{3}{2}$ for $s > \frac{k-1}{2}$ and $\sigma < s+2$ for $s > k-\frac{3}{2}$ .
\end{proof}

The estimates for the quadratic nonlinearities are as follows.
\begin{prop}
\label{Prop.1.3}
The following estimates hold:
\begin{equation}
\label{1.5}
\|\phi \psi\|_{X^{s,-\half+2\epsilon}} \lesssim \|\phi\|_{X^{k,\half+\epsilon}_{\pm}} \|\psi\|_{X^{s,\half+\epsilon}} 
\end{equation}
for $ s > - \half$ , $k \ge - \half$ , $ s < k+1$ , and
\begin{equation}
\label{1.6}
\| |\psi|^2\|_{X^{k-1,-\half+2\epsilon}_{\pm}} \lesssim \|\psi\|^2_{X^{s,\half+\epsilon}}
\end{equation}
for $s > - \frac{1}{4}$ , $ s>k-2$ , $s > \frac{k}{2}-\frac{3}{4}$ .
\end{prop}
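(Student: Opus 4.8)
The plan is to recognize \eqref{1.5} and \eqref{1.6} as the quadratic-nonlinearity estimates for the \emph{classical} Klein-Gordon-Schr\"odinger system in $n=2$, which were already established by the author in \cite{P1}; the quadratic terms $\phi\psi$ and $|\psi|^2$ are precisely the classical Yukawa and source nonlinearities. One first checks that the hypotheses coincide with the sharp range quoted in the introduction: $s>\frac{k}{2}-\frac{3}{4}$ is just $k-2s<\frac{3}{2}$, while $s>k-2$ and $s<k+1$ are $k-2<s<k+1$, and $s>-\frac14$, $k\ge-\half$ are the remaining conditions. So the shortest route is to match parameters and quote the corresponding quadratic estimates of \cite{P1}. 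For completeness I sketch the direct argument one would run to reprove them.

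For \eqref{1.5} I would dualize, reducing to the trilinear bound $\abs{\int \phi\,\psi\,\overline{u}\,dx\,dt}\lesssim\|\phi\|_{X^{k,\half+\epsilon}_{\pm}}\|\psi\|_{X^{s,\half+\epsilon}}\|u\|_{X^{-s,\half-2\epsilon}}$, where the output variable $u$ is again of Schr\"odinger type. Writing $\xi_0=\xi_1+\xi_2$, $\tau_0=\tau_1+\tau_2$ and setting the three modulation weights $\sigma_1=\tau_1\pm|\xi_1|$, $\sigma_2=\tau_2+|\xi_2|^2$, $\sigma_0=\tau_0+|\xi_0|^2$, the algebraic identity
$$\sigma_0-\sigma_1-\sigma_2=|\xi_1|^2+2\xi_1\cdot\xi_2\mp|\xi_1|$$
shows $\max(\abs{\sigma_0},\abs{\sigma_1},\abs{\sigma_2})\gtrsim\abs{|\xi_1|^2+2\xi_1\cdot\xi_2\mp|\xi_1|}$. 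After a dyadic decomposition of the three spatial frequencies and the three modulations one distributes the $b$-weights according to which modulation dominates and closes each region with the standard bilinear $L^2$ convolution/Strichartz estimates for the wave--Schr\"odinger interaction; here $s<k+1$ governs the high-wave/low-Schr\"odinger regime and $s>-\half$, $k\ge-\half$ the remaining ones.

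For \eqref{1.6} I would proceed analogously with two Schr\"odinger factors: since $|\psi|^2=\psi\overline{\psi}$, the frequencies combine as $\xi_0=\xi_1-\xi_2$ with both inputs carrying the forward weight $\angles{\tau_j+|\xi_j|^2}$, while the dual variable $v\in X^{1-k,\half-2\epsilon}_{\pm}$ is of Klein-Gordon type. Estimating $\int\psi\,\overline{\psi}\,\overline{v}\,dx\,dt$, the relevant resonance function is $\pm|\xi_0|-(|\xi_1|^2-|\xi_2|^2)$, and the controlling input is the bilinear Schr\"odinger bound $\||\psi|^2\|_{L^2_{t,x}}\lesssim\|\psi\|_{X^{-\frac14+,\half+}}^2$, which is exactly responsible for the threshold $s>-\frac14$. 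The two remaining conditions $s>k-2$ and $s>\frac{k}{2}-\frac34$ enter when redistributing the output weight $\angles{\xi_0}^{k-1}$, respectively through the high--high and the derivative-loss frequency regimes.

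The main obstacle in either estimate is the near-resonant region where the right-hand side of the modulation identity is small, so that none of the three modulation weights is large; there one cannot simply trade modulation for regularity and must instead exploit the transversality between the wave cone and the Schr\"odinger paraboloid (for \eqref{1.5}) or between the forward and backward paraboloids (for \eqref{1.6}). These borderline interactions are precisely what force the endpoint values $s>-\frac14$ and $k\ge-\half$ and what the careful case analysis of \cite{P1} resolves; hence, once the parameter match is verified, invoking \cite{P1} directly is both legitimate and the cleanest way to conclude.
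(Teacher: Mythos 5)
Your proposal matches the paper exactly: the paper's entire proof of Proposition \ref{Prop.1.3} is a citation of \cite{P1}, Prop.\ 2.2 (for (\ref{1.5})) and Prop.\ 2.4 (for (\ref{1.6})), which is precisely the route you take, and your parameter check against the classical KGS ranges is correct. The additional sketch of how one would reprove the bilinear estimates is extra context the paper does not include, but the core argument is the same.
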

\begin{proof}
\cite{P1}, Prop. 2.2 and Prop. 2.4.
\end{proof}

\begin{proof}[Proof of Theorem \ref{Theorem}]
By well-known arguments these propositions imply the \\
claimed local well-posedness result in Theorem \ref{Theorem} in the case $n=2$ .
\end{proof}

Our next aim is to prove the unconditional uniqueness result Theorem \ref{Theorem1.2}.
We prepare this by the following lemma.
\begin{lemma}
\label{Lemma1.4}
Let $|k| < \frac{1}{6}$. The following estimate applies:
$$ \|\psi \phi_{\pm}\|_{X^{k,-\frac{5}{12}-}} \lesssim \|\psi\|_{X^{k,\frac{5}{12}+}} \|\phi_{\pm}\|_{X^{-\half+,\frac{5}{12}+}_{\pm}} \, . $$
\end{lemma}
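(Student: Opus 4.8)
The plan is to pass to the dual formulation. Since the time exponent $-\frac{5}{12}-$ is negative, we write, for $u$ of unit norm in the dual space $X^{-k,\frac{5}{12}+}$,
$$\|\psi\phi_\pm\|_{X^{k,-\frac{5}{12}-}} = \sup_u \left| \iint \psi\,\phi_\pm\,\bar u \, dx\,dt \right| ,$$
so the claim reduces to a trilinear estimate in which two factors ($\psi$ and $u$) are tied to the Schr\"odinger paraboloid $\tau=-|\xi|^2$ and one factor ($\phi_\pm$) to the cone $\tau=\mp|\xi|$. Equivalently, by pairing the wave factor against the product of the two Schr\"odinger factors, it suffices to place $\psi\bar u$ in the Klainerman--Machedon space $X^{\half-,-\frac{5}{12}-}_{\pm}$ dual to $\phi_\pm$. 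This is exactly the type of Schr\"odinger$\times$Schr\"odinger$\to$wave estimate of \cite{P1}, Prop.\ 2.4, but with the time exponent $\frac{5}{12}$ in place of $\half-$; since $\frac{5}{12}$ lies below the Strichartz threshold $\half$, that proposition cannot be cited directly and the estimate must be rederived. Writing $f=\langle\xi\rangle^k\langle\tau+|\xi|^2\rangle^{\frac{5}{12}+}\widehat\psi$, $g=\langle\xi\rangle^{-\half+}\langle\tau\pm|\xi|\rangle^{\frac{5}{12}+}\widehat{\phi_\pm}$, $h=\langle\xi\rangle^{-k}\langle\tau+|\xi|^2\rangle^{\frac{5}{12}+}\widehat u$, all of unit $L^2_{\xi\tau}$ norm, the task is to bound
$$\iint_* \frac{\langle\xi_0\rangle^k\,\langle\xi_2\rangle^{\half-}\,|f|\,|g|\,|h|}{\langle\xi_1\rangle^k\,\langle\tau_0+|\xi_0|^2\rangle^{\frac{5}{12}+}\langle\tau_1+|\xi_1|^2\rangle^{\frac{5}{12}+}\langle\tau_2\pm|\xi_2|\rangle^{\frac{5}{12}+}}$$
over the convolution set $*$ given by $\xi_0=\xi_1+\xi_2$, $\tau_0=\tau_1+\tau_2$ (up to the conjugation of $u$). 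The only genuine loss is the factor $\langle\xi_2\rangle^{\half-}$, reflecting the very low regularity $-\half+$ of $\phi_\pm$; the symbol $\langle\xi_0\rangle^k/\langle\xi_1\rangle^k$ is harmless when $|\xi_1|\gtrsim|\xi_2|$ and, since $|k|<\frac16$, costs at most $\langle\xi_2\rangle^{|k|}$ otherwise.

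The key input is the algebraic resonance identity. Evaluating the Schr\"odinger and cone weights on their characteristic sets gives
$$(\tau_0+|\xi_0|^2)-(\tau_1+|\xi_1|^2)-(\tau_2\pm|\xi_2|)=|\xi_0|^2-|\xi_1|^2\mp|\xi_2|=2\,\xi_1\cdot\xi_2+|\xi_2|^2\mp|\xi_2| ,$$
so the largest of the three modulations is always $\gtrsim |2\,\xi_1\cdot\xi_2+|\xi_2|^2\mp|\xi_2||$. This splits the analysis into a non-resonant region, where this quantity is comparable to $\max(|\xi_1||\xi_2|,|\xi_2|^2)$, and a resonant region concentrated in a thin angular neighbourhood of $2\,\xi_1\cdot\xi_2+|\xi_2|^2\mp|\xi_2|\approx 0$, which can only occur when $|\xi_1|\gtrsim|\xi_2|$. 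The total modulation budget is $3\cdot\frac{5}{12}=\frac54>1$, and it is precisely this surplus that makes the argument possible even though each exponent $\frac{5}{12}$ is below $\half$, so that the embeddings $X^{0,\half+}\hookrightarrow L^q_tL^r_x$ are unavailable and one must argue bilinearly throughout.

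In the non-resonant region I would spend part of the surplus modulation on the large factor to extract a power of the resonance quantity, which dominates $\langle\xi_2\rangle^{\half-}\langle\xi_2\rangle^{|k|}$ since $\half+\frac16<1$, and then close by Cauchy--Schwarz together with the $L^4_{t,x}$ Strichartz estimate applied to the two Schr\"odinger factors (whose residual modulation still exceeds $\half$ after the splitting). In the resonant region this modulation gain is too weak; here $\xi_0\approx\xi_1$, so the $k$-symbol is $\approx 1$, and I would instead exploit the transversality of the paraboloid and the cone, restricting to the thin angular sector and invoking the bilinear $L^2_{t,x}$ refinement of the Schr\"odinger--wave interaction to recover the missing $\langle\xi_2\rangle^{\half}$-derivative. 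The main obstacle is exactly this resonant, high--high interaction: there the naive modulation and Strichartz bounds fail simultaneously, and one must quantify the angular concentration against the derivative loss $\langle\xi_2\rangle^{\half-}$ and the surviving $\langle\xi_2\rangle^{|k|}$. The recoverable transversal gain is capped, and the balance $\half+|k|<\frac23$ needed to beat it is what forces the hypothesis $|k|<\frac16$; beyond that threshold the derivative budget can no longer be met and the estimate degenerates.
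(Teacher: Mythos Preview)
Your route differs from the paper's. The paper does not rederive anything: it observes that the lemma is a variant of \cite{P1}, Prop.~2.1, and splits according to whether the output frequency and the frequency of $\psi$ are comparable. When they are, the weights $\langle\xi\rangle^k$ cancel and \cite{P1}, Prop.~2.1 applies for every $k$; when they are not, the paper invokes case~2 of that proof (resting on \cite{BH}, Prop.~4.8), and it is \emph{there} that the restriction $|k|<\tfrac16$ appears. Your plan instead reconstructs the whole estimate from the resonance identity, which is essentially redoing the content of \cite{P1} and \cite{BH} rather than citing it. That is a legitimate alternative, but two parts of your sketch do not hold up.

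First, in the non-resonant region you write that you will ``close by Cauchy--Schwarz together with the $L^4_{t,x}$ Strichartz estimate applied to the two Schr\"odinger factors (whose residual modulation still exceeds $\tfrac12$ after the splitting).'' Each Schr\"odinger factor carries only $b=\tfrac{5}{12}+$, strictly below the $\tfrac12+$ required for $X^{0,b}\hookrightarrow L^4_{t,x}$ in two space dimensions, and spending modulation on the large-modulation factor does not raise the exponents on the remaining two. Linear Strichartz is unavailable here; one genuinely needs the bilinear Schr\"odinger/wave $L^2$ estimates from \cite{BHHT} or \cite{BH}, which is exactly why the paper falls back on those references.

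Second, your identification of where $|k|<\tfrac16$ enters is inconsistent. You correctly note that in the resonant region $|\xi_0|\approx|\xi_1|$, so the symbol $\langle\xi_0\rangle^k/\langle\xi_1\rangle^k$ is $\approx 1$; but two sentences later you claim that the ``surviving $\langle\xi_2\rangle^{|k|}$'' in that same region forces the balance $\tfrac12+|k|<\tfrac23$. If the $k$-symbol is harmless on resonance there is no such surviving factor. In the paper's decomposition the $k$-dependence lives in the \emph{non-comparable} case (output and $\psi$ frequencies far apart), which in your language is automatically non-resonant since the resonance function is then $\sim|\xi_{\max}|^2$; the admissible range of $k$ is dictated by the bilinear restriction estimate of \cite{BH}, Prop.~4.8, not by the resonant angular analysis.
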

\begin{proof}
This is a variant of \cite{P1}, Prop. 2.1 and a straightforward consequence of its proof. Let $\xi_1$ and $\xi_2$ denote the spatial frequencies of $\widehat{\psi \phi_{\pm}}$ and $\widehat{\psi}$, respectively. If $|\xi_1| \sim |\xi_2|$ the estimate is in immediate consequence of this proposition for any $k \in \R$ . If $|\xi_1| \ll |\xi_2|$ or $|\xi_1| \gg |\xi_2|$ we may use case 2 of its proof (which follows from \cite{BH}, Prop. 4.8) and requires $|k| < \frac{1}{6}$ .
\end{proof}

\begin{proof}[Proof of Theorem \ref{Theorem1.2}] 
We assume $\psi_0 \in H^s(\R^2)$ , $\phi_0\in H^k(\R^2)$ , $\phi_1 \in H^{k-1}(\R^2)$, where $k=s+\half$ for the sake of simplicity. We want to show that any solution $\psi \in C^0([0,T],H^s)$ , $\phi \in C^0([0,T],H^{s+\half}) \cap C^1([0,T],H^{s-\half})$ for sufficiently large $s$ fulfills $\psi \in X^{-\frac{1}{8}+,\half+}[0,T]$ , $\phi \in X^{\frac{3}{8}+,\half+}_+[0,T] + X^{\frac{3}{8}+,\half+}_-[0,T]$ , where uniqueness holds by Theorem \ref{Theorem}.

A first step is the following lemma.
\begin{lemma}
\label{Lemma1.5}
If $ \frac{1}{6} < s \le \frac {1}{3}$ the following regularity holds:
\begin{align*}
\psi &\in X^{3s-1,1}[0,T] \cap X^{s,0}[0,T] \subset X^{\frac{11}{6}s-\frac{5}{12}-,\frac{5}{12}+}[0,T] \cap X^{2s-\half-,\half+}[0,T] \, ,\\
\phi_{\pm} &\in X^{3s-\half,1}_{\pm}[0,T] \cap X^{s+\half,0}_{\pm}[0,T] \subset X^{\frac{11}{6}s+
\frac{1}{12}-,\frac{5}{12}+}_{\pm}[0,T] \cap X^{2s-,\half+}_{\pm}[0,T] \, . 
\end{align*}
\end{lemma}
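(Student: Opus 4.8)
The plan is to establish the two left-hand memberships separately --- a cheap $b=0$ bound read off from the hypotheses and a $b=1$ bound extracted from the equations --- and then to obtain the embeddings on the right by interpolating between them. For the $b=0$ part, writing $\phi_\pm=\phi\pm i(-\Delta+1)^{-\half}\partial_t\phi$, the assumptions $\phi\in C^0([0,T],H^{s+\half})$ and $\partial_t\phi\in C^0([0,T],H^{s-\half})$ give $\phi_\pm\in C^0([0,T],H^{s+\half})$, while $\psi\in C^0([0,T],H^s)$ is assumed. On the bounded interval $C^0([0,T],H^m)\subset L^2([0,T],H^m)=X^{m,0}[0,T]$, whence $\psi\in X^{s,0}[0,T]$ and (since $k=s+\half$) $\phi_\pm\in X^{s+\half,0}_\pm[0,T]$; these are the second members of the two intersections.

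Next I would gain one full unit of modulation regularity from the equations. Writing the system in its first-order form and using the Duhamel representation of the given $C^0$-solution together with the standard energy inequality $\norm{u}_{X^{m,1}[0,T]}\lesssim\norm{u(0)}_{H^m}+\norm{(i\partial_t+\Delta)u}_{X^{m,0}[0,T]}$ for the Schr\"odinger propagator (and its $X^{m,1}_\pm$ analogue for $i\partial_t\mp(-\Delta+1)^{\half}$), the task reduces to placing the nonlinearities in $X^{m,0}=L^2_tH^m$. For $\psi$ I take $m=3s-1$ (legitimate since $\psi_0\in H^s\subset H^{3s-1}$) and must bound $(\phi_++\phi_-)^2\psi-(\phi_++\phi_-)\psi$ in $L^2_tH^{3s-1}$; for $\phi_\pm$ I take $m=3s-\half$ (here $\phi_\pm(0)\in H^{s+\half}\subset H^{3s-\half}$ as $s\le\half$) and, using that $(-\Delta+1)^{-\half}$ smooths by one derivative, must bound $\abs{\psi}^2\phi_\pm$ and $\abs{\psi}^2$ in $L^2_tH^{3s-\frac32}$.

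These $L^2_tH^m$ bounds I would get purely spatially: on $[0,T]$ I use $L^\infty_t\subset L^2_t$ (a factor $T^{\half}$) together with the $C^0$-in-time control already established, so the whole matter collapses to the $t$-uniform Sobolev product estimates in $\R^2$. The indices are dictated by the cubic terms: $H^{s+\half}\cdot H^{s+\half}\hookrightarrow H^{2s}$ followed by $H^{2s}\cdot H^s\hookrightarrow H^{3s-1}$ for the Schr\"odinger part, and $H^s\cdot H^s\hookrightarrow H^{2s-1}$ followed by $H^{2s-1}\cdot H^{s+\half}\hookrightarrow H^{3s-\frac32}$ for the Klein--Gordon part, the quadratic terms being milder. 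The main obstacle is exactly this last Klein--Gordon product: the two factor indices add to $(2s-1)+(s+\half)=3s-\half$, so the bilinear Sobolev estimate is available only for $3s-\half\ge 0$, i.e. $s\ge\frac16$ --- this is where the lower bound on $s$ is born --- while every relevant total index sum sits precisely at the critical value $n=2$, so the endpoints require the usual care and the restriction $s\le\frac13$ keeps the Schr\"odinger target $3s-1$ nonpositive. Conceptually the difficulty is that only $b=0$ information is available at this stage, so the sharp dispersive bilinear estimates of Proposition \ref{AFS} (which demand $b_0+b_1+b_2>\half$) cannot yet be invoked, and this first slice of modulation regularity must be produced by hand from the equation.

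Finally, the two claimed embeddings are pure interpolation: the $X^{m,b}$-norms are weighted $L^2$-norms in $(\xi,\tau)$, hence logarithmically convex, so $X^{m_0,b_0}[0,T]\cap X^{m_1,b_1}[0,T]\subset X^{m_\theta,b_\theta}[0,T]$ with $m_\theta=(1-\theta)m_0+\theta m_1$ and $b_\theta=1-\theta$. For $\psi$ I interpolate $(3s-1,1)$ with $(s,0)$: the choice $\theta=\frac{7}{12}$ yields $b=\frac{5}{12}$, $m=\frac{11}{6}s-\frac{5}{12}$, and $\theta=\half$ yields $b=\half$, $m=2s-\half$; for $\phi_\pm$, interpolating $(3s-\half,1)$ with $(s+\half,0)$ gives $\frac{11}{6}s+\frac{1}{12}$ at $\theta=\frac{7}{12}$ and $2s$ at $\theta=\half$. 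Taking $\theta$ infinitesimally below these values produces the displayed shifts (``$-$'' on the spatial index, ``$+$'' on $b$), which completes the argument.
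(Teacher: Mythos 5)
Your proposal is correct and follows essentially the same route as the paper (Zhou's idea): place the nonlinearities in $L^2_tH^m$ with $m=3s-1$, resp.\ $m=3s-\frac32$, using $L^\infty_t\subset L^2_t$ on $[0,T]$ plus spatial Sobolev/H\"older, deduce the $b=1$ membership from the linear inhomogeneous estimate, and interpolate --- with identical numerology, including the origin of $s>\frac16$ (your pairwise-sum condition $3s-\frac12>0$ is the paper's $r^*>1$) and $s\le\frac13$ (so $3s-1\le0$). The only cosmetic difference is that the paper performs the spatial step as a single three-factor H\"older inequality into $L^r\hookrightarrow H^l$ with $l\le0$, which is non-endpoint throughout, whereas your iterated bilinear products $H^{s+\half}\cdot H^{s+\half}\hookrightarrow H^{2s}$ etc.\ each sit at the critical index $s_1+s_2-s_0=\frac n2$ and thus need the fractional Leibniz rule rather than bare H\"older; they are nonetheless valid as you state.
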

\begin{proof}
We use an idea of Zhou \cite{Z} and estimate by Sobolev:
$$ \|\phi^2 \psi\|_{L^2([0,T],H^l)} \lesssim \|\phi^2 \psi\|_{L^2([0,T],L^r)} \lesssim T^{\half} \|\phi\|^2_{L^{\infty}([0,T],H^{s+\half})} \|\psi\|_{L^{\infty}([0,T],H^s)} < \infty \, , $$
where $\frac{1}{r} = \half - \frac{l}{2}$ , so that $L^r \hookrightarrow H^l$, and $\frac{1}{p} = \half - \frac{s+\half}{2}$ , $\frac{1}{q}=\half-\frac{s}{2}$ , so that $H^{s+\half} \hookrightarrow L^p$ and $H^s \hookrightarrow L^q$ . By H\"older we need $\frac{1}{r}=\frac{2}{p}+\frac{1}{q}$, which holds for $l=3s-1$ , so that $l \le 0$ for $s\le \frac{1}{3}$ and $\frac{1}{r} = 1-\frac{3}{2}s < 1$ for $s > 0$ .  Similarly we obtain
\begin{align*}
 \||\psi|^2 \phi\|_{L^2([0,T],H^m)} &\lesssim \||\psi|^2 \phi\|_{L^2([0,T],L^{r^*})} \\ &\lesssim T^{\half} \|\psi\|^2_{L^{\infty}([0,T],H^{s})} \|\phi\|_{L^{\infty}([0,T],H^{s+\half})} < \infty \, , 
\end{align*}
where $\frac{1}{r^*} = \half - \frac{m}{2}$ , $p$ and $q$ as before. By H\"older we need $\frac{1}{r^*}=\frac{1}{p}+\frac{2}{q}$, which holds for $m=3s-\frac{3}{2}$ , so that $m\le 0$ for $s \le \half$ and $\frac{1}{r^*} = \frac{5}{4}-\frac{3}{2}s < 1$ for $s >\frac{1}{6}$ .  The quadratic nonlinearities are handled in the same way. Because $ s \le \half$ we also have $\psi_0 \in H^{3s-1}$ , $\phi_0 \in H^{3s-\half}$ and $\phi_1 \in H^{3s-\frac{3}{2}}$ , so that by standard arguments by (\ref{0.1})  and (\ref{0.2}) we obtain $\psi \in X^{3s-1,1}[0,T]$ and $\phi_{\pm} \in X^{3s-\half,1}[0,T]$ . The rest follows by interpolation.
\end{proof}

This lemma immediately implies $ \psi \in X^{-\frac{1}{8}+,\half+}[0,T]$ and $\phi_{\pm} \in X^{\frac{3}{8}+,\half+}[0,T]$ for $ s > \frac{3}{16} $ , so that Theorem \ref{Theorem} gives unconditional uniqueness in this case.

Now we improve this lower bound on $s$ .
\begin{lemma}
\label{Lemma1.6}
The following estimate applies:
$$ \|\phi_1 \phi_2\|_{X^{-\half+,\frac{5}{12}+}_{\pm}} \lesssim \|\phi_1\|_{X^{\frac{1}{3}+\epsilon,\frac{5}{12}+}} \|\phi_2\|_{X^{\frac{1}{3}+\epsilon,\frac{5}{12}+}} \, . $$
\end{lemma}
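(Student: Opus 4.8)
The plan is to treat this exactly as Lemma \ref{Lemma1.1} was treated: the only feature preventing a direct application of Proposition \ref{AFS} is the \emph{positive} modulation exponent $\frac{5}{12}+$ on the left-hand side, so the first step is to absorb the weight $\langle \tau_0 \pm |\xi_0|\rangle^{\frac{5}{12}+}$ by means of the elementary inequality (\ref{1.1}). Writing $\xi_0 = \xi_1 + \xi_2$, $\tau_0 = \tau_1 + \tau_2$ and using subadditivity of $x \mapsto x^{\frac{5}{12}+}$, inequality (\ref{1.1}) bounds the output weight by $\langle \tau_1 \pm_1 |\xi_1|\rangle^{\frac{5}{12}+} + \langle \tau_2 \pm_2 |\xi_2|\rangle^{\frac{5}{12}+} + \max(|\xi_1|,|\xi_2|)^{\frac{5}{12}+}$. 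Up to the symmetric exchange of the two factors, this reduces the claim to estimates with \emph{vanishing} output modulation in which the extra weight has been shifted either onto the temporal weight of one input (the two modulation-dominant terms) or onto its spatial weight (the frequency-dominant term).

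The frequency-dominant term is the routine one. Shifting $\max(|\xi_1|,|\xi_2|)^{\frac{5}{12}+}$ onto the factor carrying the larger spatial frequency, say $\phi_1$, lowers its spatial regularity from $\frac{1}{3}+$ to $\frac{1}{3} - \frac{5}{12}+ = -\frac{1}{12}+$ while both temporal weights $\frac{5}{12}+$ are retained, so it remains to prove
$$\|\phi_1 \phi_2\|_{X^{-\half+,0}_{\pm}} \lesssim \|\phi_1\|_{X^{-\frac{1}{12}+,\frac{5}{12}+}_{\pm_1}} \|\phi_2\|_{X^{\frac{1}{3}+,\frac{5}{12}+}_{\pm_2}} \, .$$
I would obtain this from Proposition \ref{AFS} with $n=2$ and $s_0 = \half-$, $s_1 = -\frac{1}{12}+$, $s_2 = \frac{1}{3}+$, $b_0 = 0$, $b_1 = b_2 = \frac{5}{12}+$: here $s_0+s_1+s_2 = \frac{3}{4}+$ and $b_0+b_1+b_2 = \frac{5}{6}$, and all the listed conditions hold, the binding ones being $s_0+s_1+s_2 > \frac{n+1}{4} = \frac{3}{4}$ and $(s_0+b_0)+2s_1+2s_2 > \frac{n}{2} = 1$, both of which are satisfied by the slack hidden in the various ``$+$'' signs.

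The main obstacle is the modulation-dominant term. In that region one must bound $\langle \tau_0 \pm |\xi_0|\rangle^{\frac{5}{12}+}$ by $\langle \tau_1 \pm_1 |\xi_1|\rangle^{\frac{5}{12}+}$ and absorb the latter into the temporal weight of $\phi_1$; since $\phi_1$ carries only $\frac{5}{12}+$, this exhausts its modulation and leaves an estimate of the shape
$$\|\phi_1 \phi_2\|_{X^{-\half+,0}_{\pm}} \lesssim \|\phi_1\|_{X^{\frac{1}{3}+,0}_{\pm_1}} \|\phi_2\|_{X^{\frac{1}{3}+,\frac{5}{12}+}_{\pm_2}} \, ,$$
whose total modulation budget is only $b_0+b_1+b_2 = \frac{5}{12} < \half$, so the convergence hypothesis of Proposition \ref{AFS} is violated; indeed a bare Cauchy--Schwarz in $(\xi_1,\tau_1)$ diverges, since the only surviving modulation weight gives $\int \langle \tau_2 \pm_2 |\xi_2|\rangle^{-\frac{5}{6}} \, d\tau_1 = \infty$. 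The decisive observation I would exploit is that in this region $\phi_2$ and the output both lie at modulation $\lesssim \langle \tau_1 \pm_1 |\xi_1|\rangle$, so $\phi_1$ is the genuinely high-modulation factor: rather than discarding its modulation one keeps it and decomposes dyadically in $\langle \tau_1 \pm_1 |\xi_1|\rangle \sim L_1$, paying the output weight $L_1^{\frac{5}{12}}$ against the gain $L_1^{-\frac{5}{12}-}$ from the retained modulation of $\phi_1$ and estimating the remaining bilinear interaction of the two low-modulation factors by the transversal $L^2$ bilinear (null-form) refinements behind \cite{AFS}, which convert the generous spatial surplus $s_0+s_1+s_2 = \frac{7}{6} \gg \frac{3}{4}$ into the missing decay. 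Producing a strictly negative net power of $L_1$ from this balance, which the crude product-Strichartz bound alone does not yield, is the crux of the whole estimate, and it is where I expect the real work to lie.
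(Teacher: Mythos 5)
Your reduction via (\ref{1.1}) and your treatment of the frequency-dominant term are exactly the paper's: the paper reduces Lemma \ref{Lemma1.6} to the two estimates (\ref{a5}) and (\ref{b5}), and proves (\ref{a5}) from Proposition \ref{AFS} with the same parameters you list. The gap is in the modulation-dominant term: you never actually prove the required bound, but only outline a dyadic decomposition in $\langle\tau_1\pm_1|\xi_1|\rangle\sim L_1$ together with an appeal to unspecified transversal $L^2$ refinements, and you state yourself that producing the needed negative power of $L_1$ is open. As written, your argument therefore does not establish the lemma.

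However, your diagnosis of the obstruction is correct, and the gap cannot be closed, because the estimate you are trying to prove in that case is false. Testing any bound of the form $\|\phi_1\phi_2\|_{X^{-s_0,b}_{\pm}}\lesssim\|\phi_1\|_{X^{s_1,b_1}_{\pm_1}}\|\phi_2\|_{X^{s_2,b_2}_{\pm_2}}$ on $\widehat{\phi_i}=\chi_{\{|\xi_i|\le 1\}}\chi_{\{|\tau_i|\le R\}}$ gives $R^{b+\frac{3}{2}}\lesssim R^{b_1+b_2+1}$, hence the necessary condition $b_1+b_2\ge b+\half$. For the reduced estimate (\ref{b5}) (where $b=0$, $b_1=0$, $b_2=\frac{5}{12}+$) this fails, which is precisely the violated hypothesis $b_0+b_1+b_2>\half$ of Proposition \ref{AFS} that you noticed; but it also fails for Lemma \ref{Lemma1.6} itself, since there $b=b_1=b_2=\frac{5}{12}+$ and $\frac{5}{6}<\frac{11}{12}$. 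So no refinement of the high-modulation analysis can rescue the statement: with inputs in $X^{\cdot,\frac{5}{12}+}_{\pm_i}$ the output can carry modulation exponent at most $\frac{1}{3}$ up to epsilons. You should also be aware that the paper's own proof of (\ref{b5}) simply invokes Proposition \ref{AFS} with $b_0=b_1=0$, $b_2=\frac{5}{12}+$ and checks only the spatial conditions, silently violating $b_0+b_1+b_2>\half$; the step you could not complete is exactly the step the paper does not justify, and repairing it would require weakening the lemma (and revisiting its use in the proof of the unconditional uniqueness threshold $s>\frac{3}{22}$).
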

\begin{proof}
By (\ref{1.1}) we have to prove
\begin{align}
\label{a5}
\|\phi_1 \phi_2\|_{X^{-\half+,0}_{\pm}} &\lesssim \|\phi_1\|_{X^{-\frac{1}{12}+\frac{\epsilon}{2},\frac{5}{12}+}} \|\phi_2\|_{X^{\frac{1}{3}+\epsilon,\frac{5}{12}+}} \, , \\
\label{b5}
\|\phi_1 \phi_2\|_{X^{-\half+,0}_{\pm}} &\lesssim \|\phi_1\|_{X^{\frac{1}{3}+\epsilon,0}} \|\phi_2\|_{X^{\frac{1}{3}+\epsilon,\frac{5}{12}+}} \, .
\end{align}
(\ref{a5}) follows by Prop. \ref{AFS} with parameters $s_0=\half -$ , $s_1=-\frac{1}{12}+\frac{\epsilon}{2}$ , $s_2=\frac{1}{3}+\epsilon$, $b_0=0$ , $b_1=b_2=\frac{5}{12}+$ , thus $s_0+s_1+s_2 = \frac{3}{4}+$ and $s_1+s_2 = \frac{1}{4}+$ . Similarly (\ref{b5}) follows with $s_0=\half-$ , $s_1=s_2=\frac{1}{3}+\epsilon$ , $b_0=b_1=0$ , $b_2=\frac{5}{12}+$ , thus $s_0+s_1+s_2 = \frac{7}{6}+$.
\end{proof}

Now we assume $ \frac{1}{3} \ge s > \frac{3}{22}$ . We want to improve the regularity of $\psi$ . By Lemma \ref{Lemma1.5} we know $\psi \in X^{-\frac{1}{6}+\epsilon,\frac{5}{12}+}$ and $\phi_{\pm} \in X^{\frac{1}{3}+\epsilon,\frac{5}{12}+_{\pm}}$ . By Lemma \ref{Lemma1.4} and Lemma \ref{Lemma1.6} we obtain
\begin{align*}
\|\phi_1 \phi_2 \psi\|_{X^{-\frac{1}{6}+\epsilon,-\frac{5}{12}+}} & \lesssim  \|\phi_1 \phi_2\|_{X^{-\half+,\frac{5}{12}+}_{\pm}} \|\psi\|_{X^{-\frac{1}{6}+\epsilon,\frac{5}{12}+}} \\
& \lesssim \|\phi_1\|_{X^{\frac{1}{3}+\frac{5}{12}+}_{\pm_1}} \|\phi_2\|_{X^{\frac{1}{3}+\frac{5}{12}+}_{\pm_2}}  \|\psi\|_{X^{-\frac{1}{6}+\epsilon,\frac{5}{12}+}} \, .
\end{align*}
The quadratic term is easily treated in the same way. This implies
$$\psi \in  X^{-\frac{1}{6}+\epsilon,\frac{7}{12}-}[0,T] \cap X^{\frac{3}{22}+,0}[0,T] \subset X^{-\frac{19}{154},\half+}[0,T] \subset X^{-\frac{1}{8}+\epsilon,\half+}[0,T] \, . $$
It remains to prove iteratively $\phi_{\pm} \in X^{\frac{3}{8}+,\half+}_{\pm}[0,T]$ , starting by Lemma \ref{Lemma1.5} with $\phi_{\pm} \in X^{2s-,\half +}_{\pm} \subset X^{\frac{3}{11}+,\half+}_{\pm} \subset X^{\frac{1}{4}+,\half+}_{\pm}$ . We want to prove $\phi_{\pm} \in X^{k_{j+1},\half+}_{\pm} $ , if $\phi_{\pm} \in X^{k_j,\half+}_{\pm}$ with $k_{j+1} > k_j$ and $k_0 \ge \frac{1}{4}$ . We obtain
\begin{align*}
\| |\psi|^2 \phi_{\pm}\|_{X^{k_{j+1}-1,-\half+}_{\pm}} & \lesssim \| |\psi|^2 \|_{X^{\frac{1}{4}+2\epsilon,-\half +}_{\pm_1}} \|\phi_{\pm}\|_{X^{k_j,\half+}_{\pm_2}} \\
& \lesssim \|\psi\|^2_{X^{-\frac{1}{8}+\epsilon+,\half-}} \|\phi_{\pm}\|_{X^{k_j,\half+}_{\pm_2}} \, .
\end{align*}
For the last estimate we used \cite{P}, Prop. 2.4 with parameter $\sigma = \frac{5}{4}+2\epsilon$ , and the first estimate is by duality equivalent to
$$ \|\phi_{\pm} w\|_{X^{-\frac{1}{4}-2\epsilon,\half-}_{\pm_1}} \lesssim \|\phi_{\pm}\|_{X^{k_j,\half+}_{\pm_2}} \|w\|_{X^{1-k_{j+1},\half-}_{\pm}} \, . $$
By (\ref{1.1}) this reduces to the following estimates:
\begin{align}
\label{a6}
\|\phi_{\pm} w\|_{X^{-\frac{1}{4}-2\epsilon,0}_{\pm_1}} & \lesssim \|\phi_{\pm}\|_{X^{k_j - \half+,\half+}_{\pm_2}} \|w\|_{X^{1-k_{j+1},\half-}_{\pm}} \\
\label{b6}
\|\phi_{\pm} w\|_{X^{-\frac{1}{4}-2\epsilon,0}_{\pm_1}} & \lesssim \|\phi_{\pm}\|_{X^{k_j,\half+}_{\pm_2}} \|w\|_{X^{\half-k_{j+1},\half-}_{\pm}} \\
\label{c6}
\|\phi_{\pm} w\|_{X^{-\frac{1}{4}-2\epsilon,0}_{\pm_1}} & \lesssim \|\phi_{\pm}\|_{X^{k_j,0+}_{\pm_2}} \|w\|_{X^{1-k_{j+1},\half-}_{\pm}} \\
\label{d6}
\|\phi_{\pm} w\|_{X^{-\frac{1}{4}-2\epsilon,0}_{\pm_1}} & \lesssim \|\phi_{\pm}\|_{X^{k_j,\half+}_{\pm_2}} \|w\|_{X^{1-k_{j+1},0}_{\pm}} \, .
 \end{align}
We apply Prop. \ref{AFS}. For (\ref{a6}) we choose $s_0=\frac{1}{4}+2\epsilon$ , $s_1=k_j -\half+$ , $s_2=1-k_{j+1}$, $b_0 =0$ , $b_1=\half+$ , $b_2=\half-$ , thus $s_0+s_1+s_2= \frac{3}{4}+ 2\epsilon + k_j - k_{j+1} + > \frac{3}{4}$ , if $k_{j+1} = k_j + 2\epsilon$ . Moreover $s_1+s_2 = \half + k_j-k_{j+1}+ = \half-2\epsilon+$ , so that the assumptions of Prop. \ref{AFS} are satisfied. The other estimates can be handled in the same way, as one easily checks. The quadratic term is also easily treated, because we obtain as before
$$ \| |\psi|^2\|_{X^{\frac{1}{4}+2\epsilon,-\half+}_{\pm}} \lesssim \|\psi\|^2_{X^{-\frac{1}{8}+\epsilon+,\half+}} \, . $$

After finitely many steps we obtain $\phi_{\pm} \in X^{\frac{3}{8}+,\half+}_{\pm}[0,T]$ , so that we arrive in a space, where uniqueness holds by Theorem \ref{Theorem}.
\end{proof}

{\bf Remarks:} 1. It is also possible to generalize Theorem \ref{Theorem1.2} to data $\psi_0 \in H^s$ , $\phi_0 \in H^k$ , $\phi_1 \in H^{k-1}$ for $k \neq s+\half$ under appropriate conditions on $s$ and $k$. \\
2. If we consider the finite energy case $s=k=1$ , where by energy conservation and Theorem \ref{Theorem} global well-posedness holds for $\psi \in X^{1,\half+}$ , $\phi_{\pm} \in X^{1,\half+}_{\pm}$ , we can easily show unconditional uniqueness. One only has to estimate by Sobolev
$$ \| \phi^2 \psi\|_{L^2([0,T],L^2)} \lesssim T^{\half} \|\phi\|^2_{L^{\infty}([0,T],H^1)} \|\psi\|_{L^{\infty}([0,T],H^1)} < \infty $$
and also
$$ \| |\psi|^2 \phi\|_{L^2([0,T],L^2)} < \infty \, ,$$
 so that
$ \psi \in X^{0,1}[0,T]  \cap X^{1,0}[0,T] \subset X^{\half-,\half+}[0,T]$ and $ \phi_{\pm} \in X^{1,1}_{\pm}[0,T] $ , where uniqueness holds by Theorem {\ref{Theorem}. This holds true in dimensions $n=2$ as well as $n=3$.

\section{The case $n=3$}
We start with the proof of the local well-posedness result.

\begin{prop}
\label{Prop.2.1}
Let $s > - \half$ , $k > \half$ and $s < \min(2k-\half,k+\half)$ . Then the estimate (\ref{1.0}) applies.
\end{prop}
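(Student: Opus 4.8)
The plan is to reproduce the structure of the proof of Proposition \ref{Prop.1.1}, replacing the two-dimensional ingredients by their three-dimensional counterparts. As there, I would factor the trilinear estimate (\ref{1.0}) through two steps: first a bilinear Klein--Gordon product estimate for $\phi_1\phi_2$ in a space $X^{\sigma,\half-}_{\pm}$, and then a product estimate for $|\phi|^2\psi$ in which $|\phi|^2$ is treated as a single function of regularity $\sigma$. The first step rests on Proposition \ref{AFS} in its $n=3$ form (from \cite{AFS1}) together with the elementary inequality (\ref{1.1}); the second step is the $n=3$ analogue of Lemma \ref{Lemma1.2}, available from \cite{P}, Prop. 2.2. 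The correct intermediate regularity turns out to be $\sigma = 2k-\frac{3}{2}-$ for $\half < k \le 1$ and $\sigma = k-\half-$ for $k>1$, so the argument splits at the crossover $k=1$, which is exactly where $2k-\half = k+\half$.

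For the bilinear step I would apply (\ref{1.1}) to the output weight $\angles{\tau_0\pm|\xi_0|}^{\half-}$, so that the $\half-$ derivatives are absorbed either into the modulation weight of one factor or into $\half-$ spatial derivatives on the higher-frequency factor. This reduces the claim to two estimates of the form $\|\phi_1\phi_2\|_{X^{\sigma,0}_{\pm}} \lesssim \|\phi_1\|_{X^{k-\half+,\half+}_{\pm_1}}\|\phi_2\|_{X^{k,\half+}_{\pm_2}}$ and $\|\phi_1\phi_2\|_{X^{\sigma,0}_{\pm}} \lesssim \|\phi_1\|_{X^{k,0}_{\pm_1}}\|\phi_2\|_{X^{k,\half+}_{\pm_2}}$, exactly as in (\ref{1.2})--(\ref{1.3}). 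Applying Proposition \ref{AFS} with $n=3$, the governing conditions become $s_0+s_1+s_2 > \frac{n+1}{4}=1$ and the corner condition $(s_0+b_0)+2s_1+2s_2 > \frac{n}{2}=\frac{3}{2}$. With $s_0=-\sigma$, $s_1=k-\half+$, $s_2=k$, $b_0=0$, $b_1=b_2=\half+$, the sum condition forces $\sigma < 2k-\frac{3}{2}$, while at $\sigma = 2k-\frac{3}{2}$ the corner condition reads $2k+\half > \frac{3}{2}$, i.e. $k>\half$; the constraint $s_0+s_1\ge 0$ reads $1-k\ge 0$ and thus limits this choice to $k\le 1$. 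For $k>1$ the binding constraint is instead $s_0+s_1\ge 0$ with $\sigma = k-\half-$. One then checks that all remaining conditions of Proposition \ref{AFS} hold throughout, and the second reduction is handled in the same way.

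For the product step I would feed the resulting $\sigma$ into the $n=3$ version of Lemma \ref{Lemma1.2}, namely $\||\phi|^2\psi\|_{X^{s,-\half+2\epsilon}} \lesssim \||\phi|^2\|_{X^{\sigma,\half-}_{\pm}}\|\psi\|_{X^{s,\half-}}$ from \cite{P}, Prop. 2.2, whose hypotheses are $\sigma\ge-\half$ and $\sigma > s-1$ (together with a lower bound on $\sigma$ in the high-regularity regime). For $\half<k\le 1$, $\sigma = 2k-\frac{3}{2}$ gives $\sigma\ge-\half \Leftrightarrow k\ge\half$ and $\sigma>s-1 \Leftrightarrow s<2k-\half$; for $k>1$, $\sigma = k-\half$ gives $\sigma>s-1 \Leftrightarrow s<k+\half$. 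These are precisely the hypotheses $k>\half$ and $s<\min(2k-\half,k+\half)$ of the proposition, and composing the two steps yields (\ref{1.0}).

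The only real work is the simultaneous verification of the full list of inequalities in Proposition \ref{AFS} for the $n=3$ parameters. I expect the delicate ones to be the equal-weight corner conditions and the pairwise conditions $s_i+s_j\ge 0$: it is precisely these that force the threshold $k>\half$, sharper than the $n=2$ threshold $\frac{3}{8}$, and pin the crossover at $k=1$. Since the $n=3$ form of the d'Ancona--Foschi--Selberg estimate is already quoted in Proposition \ref{AFS} and the $n=3$ product estimate is available from \cite{P}, I anticipate no genuinely new difficulty beyond this bookkeeping.
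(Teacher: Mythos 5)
Your proposal matches the paper's proof in both structure and detail: the paper proves Proposition \ref{Prop.2.1} by exactly this two-step factorization, with Lemma \ref{Lemma2.1} giving the bilinear wave estimates at the intermediate regularities $\sigma=2k-\frac{3}{2}-$ (for $k\le 1$) and $\sigma=k-\half-$ (for $k>1$) via (\ref{1.1}) and the $n=3$ case of Proposition \ref{AFS}, and Lemma \ref{Lemma2.2} supplying the product estimate $\||\phi|^2\psi\|_{X^{s,-\half+2\epsilon}}\lesssim\||\phi|^2\|_{X^{\sigma,\half-}_{\pm}}\|\psi\|_{X^{s,\half-}}$ from the author's earlier 3D paper (cited there as \cite{P}, Thm.\ 2.1 rather than Prop.\ 2.2, a trivial discrepancy). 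Your parameter verification, including the thresholds $k>\half$ and the crossover at $k=1$ from the conditions $(s_0+b_0)+2s_1+2s_2>\frac{3}{2}$ and $s_0+s_1\ge 0$, agrees with the paper's.
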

This is a consequence of the following two lemmas.
\begin{lemma}
\label{Lemma2.1}
The following estimates apply for $ k > \half$ :
\begin{equation}
\label{a8}
\|\phi_1 \phi_2\|_{X^{2k-\frac{3}{2}-,\half-}_{\pm} } \lesssim \|\phi_1\|_{X^{k,\half+\epsilon}_{\pm_1}}  \|\phi_2\|_{X^{k,\half+\epsilon}_{\pm_2}} \, ,
\end{equation}
if $ k \le 1$ , and
\begin{equation}
\label{b8}
\|\phi_1 \phi_2\|_{X^{k-\half-,\half-}_{\pm}} \lesssim \|\phi_1\|_{X^{k,\half+\epsilon}_{\pm_1}}  \|\phi_2\|_{X^{k,\half+\epsilon}_{\pm_2}} \, ,
\end{equation}
if $ k > 1$ .
\end{lemma}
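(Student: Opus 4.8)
The plan is to mirror exactly the two-step scheme used for the $n=2$ analogue in Lemma \ref{Lemma1.1}. First I would use the elementary frequency--modulation inequality (\ref{1.1}) to dispose of the output modulation weight $\half-$, trading it either for a spatial derivative on one input factor or for a reduction of that factor's own modulation weight; this turns each claim into a pair of estimates with vanishing output modulation index $b_0=0$. Then I would read off each of these $b_0=0$ estimates directly from the $n=3$ version of Proposition \ref{AFS}, the only work being the verification of its long list of exponent conditions.

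Concretely, for (\ref{a8}) the bound (\ref{1.1}) dominates $\langle\tau_0\pm|\xi_0|\rangle^{\half-}$ by $\langle\tau_1\pm_1|\xi_1|\rangle^{\half-}+\langle\tau_2\pm_2|\xi_2|\rangle^{\half-}+\max(|\xi_1|,|\xi_2|)^{\half-}$, so that (using the symmetry of the two factors to place the surviving weight on $\phi_1$) it suffices to prove
\begin{align*}
\|\phi_1 \phi_2\|_{X^{2k-\frac{3}{2}-,0}_{\pm}} &\lesssim \|\phi_1\|_{X^{k-\half+,\half+\epsilon}_{\pm_1}} \|\phi_2\|_{X^{k,\half+\epsilon}_{\pm_2}} \, , \\
\|\phi_1 \phi_2\|_{X^{2k-\frac{3}{2}-,0}_{\pm}} &\lesssim \|\phi_1\|_{X^{k,0}_{\pm_1}} \|\phi_2\|_{X^{k,\half+\epsilon}_{\pm_2}} \, ,
\end{align*}
the first absorbing the $\max$-term into a spatial derivative on $\phi_1$ and the second absorbing $\langle\tau_1\pm_1|\xi_1|\rangle^{\half-}$ into the modulation weight of $\phi_1$. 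I would settle the first through Proposition \ref{AFS} with $s_0=\frac{3}{2}-2k+$, $s_1=k-\half+$, $s_2=k$, $b_0=0$, $b_1=b_2=\half+\epsilon$, and the second with $s_0=\frac{3}{2}-2k+$, $s_1=s_2=k$, $b_0=b_1=0$, $b_2=\half+\epsilon$.

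For (\ref{b8}) the identical reduction via (\ref{1.1}) leads to
\begin{align*}
\|\phi_1 \phi_2\|_{X^{k-\half-,0}_{\pm}} &\lesssim \|\phi_1\|_{X^{k-\half+,\half+\epsilon}_{\pm_1}} \|\phi_2\|_{X^{k,\half+\epsilon}_{\pm_2}} \, , \\
\|\phi_1 \phi_2\|_{X^{k-\half-,0}_{\pm}} &\lesssim \|\phi_1\|_{X^{k,0}_{\pm_1}} \|\phi_2\|_{X^{k,\half+\epsilon}_{\pm_2}} \, ,
\end{align*}
which I would again obtain from Proposition \ref{AFS}, now taking $s_0=\half-k+$ in place of $\frac{3}{2}-2k+$ and keeping the same choices of $s_1,s_2,b_0,b_1,b_2$ as in the two cases above.

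The routine but essential part, and the step needing the most care, is the bookkeeping of the many inequalities in Proposition \ref{AFS}. I expect the genuinely binding ones to be precisely those that pin down the stated ranges of $k$: for the first reduction of (\ref{a8}) the condition $(s_0+b_0)+2s_1+2s_2>\frac{n}{2}$ gives $s_0+b_0+2s_1+2s_2=\half+2k$ and so forces $k>\half$, while $s_0+s_1=1-k$ and $2s_0+2s_1+(s_2+b_2)=\frac{5}{2}-k$ are the ones that require $k\le1$; for (\ref{b8}) the condition $2s_0+2s_1+(s_2+b_2)=k+\half$ (first reduction) and $s_0+s_1+s_2=k+\half$ against $\frac{n}{2}-(b_0+b_1)=\frac{3}{2}$ (second reduction) are what force $k>1$. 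The several borderline inequalities whose right-hand side equals $1$, such as $s_0+s_1+s_2>\frac{n+1}{4}$ and $s_0+s_1+s_2>\frac{n-1}{2}-b_0$, are saved by the strict positive $\epsilon$- and $-$-losses built into the exponents, so none of them is an obstruction.
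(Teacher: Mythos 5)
Your proposal is correct and follows essentially the same route as the paper: reduction via (\ref{1.1}) to two estimates with vanishing output modulation weight, followed by a direct application of Proposition \ref{AFS} with the same parameter choices. (You in fact supply more detail than the paper, which merely asserts that the reduced estimates follow from Proposition \ref{AFS}; note also that your exponent $k-\half+$ on the first factor is the correct one, consistent with (\ref{1.2}), while the $k-\half-$ appearing in the paper's first reduction of (\ref{a8}) looks like a typo.)
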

\begin{proof}
 By (\ref{1.1}) we may reduce (\ref{a8}) to
$$
\|\phi_1 \phi_2\|_{X^{2k-\frac{3}{2}-,0}_{\pm}}  \lesssim \|\phi_1\|_{X^{k-\half-,\half+\epsilon}_{\pm_1}}  \|\phi_2\|_{X^{k,\half+\epsilon}_{\pm_2}} $$
and
$$\|\phi_1 \phi_2\|_{X^{2k-\frac{3}{2}-,0}_{\pm}}  \lesssim \|\phi_1\|_{X^{k,0}_{\pm_1}}  \|\phi_2\|_{X^{k,\half+\epsilon}_{\pm_2}} \, .$$
These estimates follow from Prop. \ref{AFS}. Similarly we handle (\ref{b8}) by reduction  to
$$
\|\phi_1 \phi_2\|_{X^{k-\half-,0}_{\pm}}  \lesssim \|\phi_1\|_{X^{k-\half+,\half+\epsilon}_{\pm_1}}  \|\phi_2\|_{X^{k,\half+\epsilon}_{\pm_2}} $$
and
$$\|\phi_1 \phi_2\|_{X^{k-\half-,0}_{\pm}}  \lesssim \|\phi_1\|_{X^{k,0}_{\pm_1}} \|\phi_2\|_{X^{k,\half+\epsilon}_{\pm_2}} \, .$$ 
\end{proof}

\begin{lemma}
\label{Lemma2.2}
The following estimates apply:
\begin{equation}
\label{a9}
\| |\phi|^2  \psi\|_{X^{s,-\half+2\epsilon}} \lesssim \| |\phi|^2 \|_{X^{2k-\frac{3}{2}-,\half-}_{\pm}} \|\psi\|_{X^{s,\half-}} 
\end{equation}
for $k >\half$ , $k \le 1$ , $s<2k-\half$ , $s >-\half$ , and
\begin{equation}
\label{b9}
\| |\phi|^2 \psi\|_{X^{s,-\half+2\epsilon}} \lesssim \| |\phi|^2 \|_{X^{k-\half-,\half-}_{\pm}} \|\psi\|_{X^{s,\half-}} 
\end{equation}
for $k > 1$ , $s < k+\half$ , $s > - \half$ .
\end{lemma}
\begin{proof}
This is a consequence of \cite{P}, Thm. 2.1, where in case (\ref{a9}) we choose $\sigma = 2k-\frac{3}{2}- >\half$ , thus $\sigma > s-1$ for $s < 2k-\half$ , $\sigma > -s-1 $ for $k > \half$ and $s > -\half$, whereas in case (\ref{b9}) we have $\sigma = k-\half- > \half$ , $\sigma > s-1$ for $s < k+\half$ , thus $s > -\sigma-1 = -k+\half$ for $ k > 1$ and $s >-\half$ .
\end{proof}

Next we have to prove
\begin{prop}
\label{Prop.2.2}
If $ s \ge 0$ , $k > \half$ and $s > \max(\frac{k-1}{2},k-\frac{3}{2})$, the estimate (\ref{1.4'}) applies.
\end{prop}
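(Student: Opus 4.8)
Proposition \ref{Prop.2.2} asserts the trilinear estimate (\ref{1.4'}), namely $\||\psi|^2\phi\|_{X^{k-1,-\half+2\epsilon}_{\pm}} \lesssim \|\psi\|^2_{X^{s,\half+\epsilon}}\|\phi\|_{X^{k,\half+\epsilon}_{\pm_2}}$, in dimension $n=3$ under the hypotheses $s\ge 0$, $k>\half$, $s>\max(\frac{k-1}{2},k-\frac{3}{2})$. I will prove it.

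The plan is to follow exactly the two-step structure used for the analogous statement in dimension $n=2$ (Prop.\ \ref{Prop.1.2}), namely to split the trilinear estimate through an auxiliary bilinear quantity $\||\psi|^2\|_{X^{\sigma,\cdot}_{\pm_1}}$. Concretely, I expect to state two lemmas mirroring Lemma \ref{Lemma1.3} and Lemma \ref{Lemma1.4'}. The first lemma reduces (\ref{1.4'}) to an estimate of the form
\begin{equation*}
\| |\psi|^2 \phi\|_{X^{k-1,-\half+2\epsilon}_{\pm}} \lesssim \||\psi|^2\|_{X^{\sigma,-\half+2\epsilon}_{\pm_1}}\|\phi\|_{X^{k,\half+\epsilon}_{\pm_2}},
\end{equation*}
where the intermediate regularity is $\sigma=\half-$ in the range $\half<k\le 1$ and $\sigma=k-\half-$ when $k>1$; these choices parallel the $n=2$ thresholds $\frac14-$ and $k-\half-$ but shifted by the dimensional gap. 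As in Lemma \ref{Lemma1.3}, I would dualize this to $\|\phi w\|_{X^{-\sigma+,\half-2\epsilon}_{\pm_1}}\lesssim \|\phi\|_{X^{k,\half+\epsilon}_{\pm_2}}\|w\|_{X^{1-k,\half-2\epsilon}_{\pm}}$ and then apply (\ref{1.1}) to distribute the $\half-$ weight, producing four sub-estimates (the three ways of trading the modulation weight $\max(|\xi_1|,|\xi_2|)$ for a derivative, plus the case where the hyperbolic weight absorbs it), each of which is checked directly against Prop.\ \ref{AFS} with $n=3$.

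The second lemma is the bilinear bound $\||\psi|^2\|_{X^{\sigma,-\half+}_{\pm}}\lesssim \|\psi\|^2_{X^{s,\half-}}$ for the appropriate $\sigma$. For $n=3$ this is exactly the content of \cite{P}, Prop.\ 2.4, applied with the right spectral parameter. In the range $\half<k\le 1$ I would take $\sigma=\half-$ and verify the hypotheses of that proposition — the binding conditions being $\sigma<2s+\frac{n+1}{2}=2s+2$ (forced by $s\ge 0$, since $\half<2$) and $\sigma<s+2$ — while for $k>1$ I would take $\sigma=k-\half-$, where the constraints become $\sigma<2s+2$, i.e.\ $s>\frac{k-1}{2}$ up to $\epsilon$, and $\sigma<s+2$, i.e.\ $s>k-\frac52$, both of which are guaranteed by the hypothesis $s>\max(\frac{k-1}{2},k-\frac32)$. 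Combining the two lemmas gives Prop.\ \ref{Prop.2.2} immediately, exactly as the proof of Prop.\ \ref{Prop.1.2} combines Lemmas \ref{Lemma1.3} and \ref{Lemma1.4'}.

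The main obstacle I anticipate is purely bookkeeping: verifying that the specific numerology of the $n=3$ case — where the critical sum in Prop.\ \ref{AFS} is governed by $\frac{n+1}{2}=2$, $\frac{n}{2}=\frac32$, $\frac{n-1}{2}=1$, and $\frac{n+1}{4}=1$ — still leaves all fifteen inequalities of Prop.\ \ref{AFS} satisfied at the chosen endpoints for the dualized sub-estimates. In particular the conditions $s_1+s_2\ge 0$ and the "summed" conditions $s_0+s_1+s_2>1$ tend to be the tightest and will dictate precisely why the hypothesis $s\ge 0$ (rather than $s\ge-\frac18$ as for $n=2$) is required here. Once the parameter choices are recorded, each individual verification is routine and I would only display the borderline cases explicitly, as is done in Lemma \ref{Lemma1.3}.
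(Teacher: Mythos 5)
Your proposal follows essentially the same route as the paper: Prop.~\ref{Prop.2.2} is obtained by combining a duality/(\ref{1.1})/Prop.~\ref{AFS} reduction through an intermediate norm $\||\psi|^2\|_{X^{\sigma,-\half+2\epsilon}_{\pm_1}}$ with $\sigma=\half-$ for $\half<k\le 1$ and $\sigma=k-\half\pm$ for large $k$ (this is exactly Lemma~\ref{Lemma2.3}), followed by the bilinear Schr\"odinger estimate from \cite{P} (Lemma~\ref{Lemma2.4}). The only deviations are cosmetic: the paper takes $\sigma=k-\half+$ rather than $k-\half-$ in the second regime (which is what makes the borderline $k=1$ close, though your case split covers $k=1$ by the first regime instead), and the hypotheses of the cited bilinear estimate are $\sigma<2s+\frac{3}{2}$ and $\sigma<s+2$ in the paper's normalization, which yield precisely $s>\frac{k-1}{2}$ and $s>k-\frac{3}{2}$.
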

This is a consequence of the following two lemmas.
\begin{lemma}
\label{Lemma2.3}
If $\half < k < 1$ we have
$$ \| |\psi|^2 \phi\|_{X^{k-1,-\half+2\epsilon}_{\pm}} \lesssim \| |\psi|^2 \|_{X^{\half-,-\half+2\epsilon}_{\pm_1}} \|\phi\|_{X^{k,\half+\epsilon}_{\pm_2}} $$
and, if $ k \ge 1$ we have
$$ \| |\psi|^2 \phi\|_{X^{k-1,-\half+2\epsilon}_{\pm}} \lesssim \| |\psi|^2 \|_{X^{k-\half+,-\half+2\epsilon}_{\pm_1}} \|\phi\|_{X^{k,\half+\epsilon}_{\pm_2}} \, .$$
\end{lemma}
\begin{proof}
The first estimate is by duality equivalent to
$$ \| \phi w\|_{X^{-\half+,\half-2\epsilon}_{\pm_1}} \lesssim \| \phi \|_{X^{k,\half + \epsilon}_{\pm_2}} \|w\|_{X^{1-k,\half-2\epsilon}_{\pm}} \, . $$
This is handled similarly as in Lemma \ref{Lemma1.3} by (\ref{1.1}) and Prop. \ref{AFS}. The second estimate is handled in the same way.
\end{proof}
\begin{lemma}
\label{Lemma2.4}
If $s \ge 0$ the following estimate applies:
$$ \| |\psi|^2 \|_{X^{\half-,-\half+}_{\pm}} \lesssim \| \psi\|^2_{X^{s,\half+}} \, , $$
and, if $s > \max(k-\frac{3}{2},\frac{k-1}{2})$ and $ k \ge 1$ :
$$\| |\psi|^2 \|_{X^{k-\half+,-\half+}_{\pm}} \lesssim \| \psi\|^2_{X^{s,\half+}} \, . $$
\end{lemma}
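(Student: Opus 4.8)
The plan is to read off both inequalities from \cite{P}, Prop.\ 2.4, exactly as the two-dimensional Lemma \ref{Lemma1.4'} was deduced from \cite{P1}, Prop.\ 2.4. The key structural observation is that $|\psi|^2=\psi\overline\psi$ is a product of a Schr\"odinger wave with a conjugated Schr\"odinger wave, so its output is measured in a Klein--Gordon norm $X^{\sigma-1,-\half+}_{\pm}$; this is precisely the Schr\"odinger$\times$Schr\"odinger$\to$Klein--Gordon bilinear estimate supplied by \cite{P}, Prop.\ 2.4, and not an instance of Prop.\ \ref{AFS}, which governs only Klein--Gordon$\times$Klein--Gordon interactions. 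That proposition carries a free parameter $\sigma$ with output regularity $\sigma-1$, whose two binding hypotheses I expect to be $\sigma<2s+\frac32$ and $\sigma<s+2$; the whole proof is then the choice of $\sigma$ matching the target exponents forced by the case split in Lemma \ref{Lemma2.3} at $k=1$.

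For the first estimate I would apply \cite{P}, Prop.\ 2.4 with $\sigma=\frac32-$, so that the output exponent is $\sigma-1=\half-$ as required. The condition $\sigma<2s+\frac32$ then reduces to $s\ge 0$ (a borderline equality absorbed by the strict ``$-$'' in $\sigma=\frac32-$), while $\sigma<s+2$ reduces to $s>-\half$; both hold under the hypothesis $s\ge 0$. I note that in $n=3$ the constraint $s\ge 0$ is likely a genuine requirement of the underlying Bejenaru--Herr bilinear Schr\"odinger estimate (which lives at $L^2$ regularity), rather than merely a consequence of the scaling line, so I would not expect to push below $s=0$ here.

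For the second estimate I would instead take $\sigma=k+\half+$, so the output exponent is $\sigma-1=k-\half+$. Now $\sigma<2s+\frac32$ becomes $s>\frac{k-1}{2}$ and $\sigma<s+2$ becomes $s>k-\frac32$, which together are exactly the stated hypothesis $s>\max(\frac{k-1}{2},k-\frac32)$. The assumption $k\ge 1$ guarantees $k-\half+\ge\half+$, placing the output exponent in the regime where this larger choice of $\sigma$ is the efficient one; this is consistent with, and dictated by, the split at $k=1$ in Lemma \ref{Lemma2.3}, where for $\half<k<1$ one instead caps the gain at the $\half-$ level of the first estimate.

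The routine part is simply recording the correct $\sigma$ in each case. The point that will require care is the endpoint bookkeeping: verifying that all the remaining, non-binding hypotheses of \cite{P}, Prop.\ 2.4 hold throughout the parameter range, and checking that the ``$-$''/``$+$'' slack in $\sigma$ together with the fact that the input carries $\half+$ (rather than the $\half-$ of Lemma \ref{Lemma1.4'}) leaves enough room for the $-\half+$ output and the borderline cases $s=0$, $s=\frac{k-1}{2}$, $s=k-\frac32$ to close. Beyond this I anticipate no genuine difficulty, since the argument is the exact three-dimensional mirror of the already-established Lemma \ref{Lemma1.4'}.
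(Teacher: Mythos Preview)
Your proposal is correct and follows essentially the same approach as the paper: you pick $\sigma=\frac32-$ for the first estimate and $\sigma=k+\half+$ for the second, and you identify exactly the same binding hypotheses $2s>\sigma-\frac32$ and $s>\sigma-2$ that the paper records. The only slip is the citation: in \cite{P} the relevant result is labeled Thm.~2.2, not Prop.~2.4 (you have carried over the numbering from \cite{P1} used in the two-dimensional Lemma~\ref{Lemma1.4'}).
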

\begin{proof}
This follows from \cite{P}, Thm. 2.2, where for the first estimate we choose $\sigma = \frac{3}{2}-$ , so that $2s > \sigma - \frac{3}{2}$ , if $s \ge 0$ , and for the second estimate $\sigma = k+\half+$ , thus $2s> \sigma-\frac{3}{2}=k-1$ , if $s > \frac{k-1}{2}$ , and $s > \sigma -2 = k-\frac{3}{2}+$ .
\end{proof}
The estimates for the quadratic nonlinearities are given in the next proposition.
\begin{prop}
\label{Prop.2.3}
The estimate (\ref{1.5}) applies for $s >-1$ , $k> -\half$ , $k>-1-s$ , $s<k+1$ , and the estimate (\ref{1.6}) holds for  $s> \max(\frac{k}{2}-\frac{3}{4},k-2)$ , $s>-\frac{1}{4}$ .
\end{prop}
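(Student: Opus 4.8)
The plan is to recognize that (\ref{1.5}) and (\ref{1.6}) are nothing but the \emph{quadratic} bilinear estimates of the classical three-dimensional Klein-Gordon-Schr\"odinger system: the product $\phi\psi$ of a Klein-Gordon factor with a Schr\"odinger factor is exactly the term appearing in the Schr\"odinger equation, and $|\psi|^2$ is exactly the source in the wave equation. Since the additional cubic interactions play no role here, both estimates should follow verbatim from the author's earlier work \cite{P}, precisely the device already used in the cubic Lemmas \ref{Lemma2.2} and \ref{Lemma2.4}. Thus I expect the proof to consist only of selecting the free regularity parameter $\sigma$ and checking that the hypotheses of the cited theorems reduce to the stated conditions.

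For (\ref{1.5}) I would invoke \cite{P}, Thm. 2.1, which bounds $\|n\psi\|_{X^{s,-\half+2\epsilon}}$ by $\|n\|_{X^{\sigma,\half-}_{\pm}}\|\psi\|_{X^{s,\half-}}$ for a Klein-Gordon factor $n$ and a Schr\"odinger factor $\psi$ under the conditions $\sigma > s-1$, $\sigma > -s-1$, $\sigma > -\half$ and $s > -1$. Choosing $n=\phi$ and $\sigma = k$ turns the first two into $s < k+1$ and $k > -1-s$, while the remaining two become $k > -\half$ and $s > -1$, which are exactly our hypotheses. Raising the temporal index of the two inputs from $\half-$ to $\half+\epsilon$ only enlarges the corresponding norms, so the estimate transfers directly to the right-hand side of (\ref{1.5}).

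For (\ref{1.6}) I would use \cite{P}, Thm. 2.2, in the form $\||\psi|^2\|_{X^{\sigma-1,-\half+2\epsilon}_{\pm}} \lesssim \|\psi\|^2_{X^{s,\half+}}$, valid whenever $2s > \sigma-\frac{3}{2}$, $s > \sigma-2$ and $s > -\frac{1}{4}$; this is the same relation already exploited in Lemma \ref{Lemma2.4}. Matching the output regularity $k-1$ forces $\sigma = k$, and the three conditions then read $s > \frac{k}{2}-\frac{3}{4}$, $s > k-2$ and $s > -\frac{1}{4}$, which together are $s > \max(\frac{k}{2}-\frac{3}{4},k-2)$ and $s > -\frac{1}{4}$, exactly as claimed.

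I do not foresee a genuine analytic obstacle here: the substance resides in the Bejenaru-Herr type bilinear machinery already packaged into \cite{P}, and the only work is the parameter bookkeeping above. The single point to verify carefully is that, after the substitution $\sigma = k$, none of the suppressed borderline endpoints (the $+$ and $-$ in the cited exponents) is violated, so that the strict inequalities required by those theorems are genuinely met; this is routine.
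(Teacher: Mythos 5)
Your proposal is correct and follows exactly the paper's route: the paper proves Proposition \ref{Prop.2.3} simply by citing \cite{P}, Thm.\ 2.1 and Thm.\ 2.2, which is precisely the reduction you carry out (your parameter bookkeeping just makes explicit what the paper leaves implicit).
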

\begin{proof}
This was proven in \cite{P}, Thm. 2.1 and Thm. 2.2.
\end{proof}
\begin{proof}[Proof of Theorem \ref{Theorem}]
By standard arguments the claimed local well-posedness in Theorem \ref{Theorem} is a consequence of Propositions \ref{Prop.2.1}, \ref{Prop.2.2} and \ref{Prop.2.3}.
\end{proof}

Next, we prove the unconditional well-posedness result.
\begin{proof}[Proof of Theorem \ref{Theorem1.2}]
As in the two-dimensional case we obtain
\begin{lemma}
\label{Lemma2.5}
Let $ \frac{1}{3} < s \le \frac{2}{3}$ . Then we have
\begin{align*}
\psi &\in X^{3s-2,1}[0,T] \cap X^{s,0}[0,T] \subset X^{2s-1-,\half+}[0,T] \, , \\
 \phi_{\pm} &\in X^{3s-\frac{3}{2},1}_{\pm}[0,T] \cap X^{s+\half,0}_{\pm}[0,T] \subset X^{2s-\half-,\half+}_{\pm}[0,T] \, . 
\end{align*}
\end{lemma}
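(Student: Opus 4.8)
The plan is to transcribe the proof of Lemma \ref{Lemma1.5} into dimension three, keeping Zhou's idea: the assumed a priori regularity $\psi \in C^0([0,T],H^s)$, $\phi \in C^0([0,T],H^{s+\half})\cap C^1([0,T],H^{s-\half})$ already forces the right-hand sides of (\ref{0.1}) and (\ref{0.2}) into $L^2([0,T],H^\ell)$ for suitable (nonpositive) $\ell$, and the inhomogeneous linear theory then upgrades this to a full unit of time regularity in the corresponding $X^{m,1}$ spaces. Only the planar Sobolev exponents must be replaced by their three-dimensional analogues, so I do not expect any genuinely new input.

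First I would record the $n=3$ embedding $H^a \hookrightarrow L^b$ with $\frac1b=\half-\frac a3$, and estimate the two nonlinearities by H\"older. For the cubic Schr\"odinger term, with $\phi\in H^{s+\half}$ and $\psi\in H^s$, one gets $\|\phi^2\psi\|_{L^r}\lesssim\|\phi\|_{H^{s+\half}}^2\|\psi\|_{H^s}$ with $\frac1r=\frac2p+\frac1q=\frac76-s$, hence $\phi^2\psi\in L^2([0,T],H^{3s-2})$ after integrating the $L^\infty_t$ bound over $[0,T]$ (this is where the factor $T^{\half}$ enters). For the Klein--Gordon term, the triple with two factors of $\psi$ and one of $\phi$ gives $\frac{1}{r^*}=\frac1p+\frac2q=\frac43-s$, so $|\psi|^2\phi\in L^2([0,T],H^{3s-\frac52})$; since $(-\Delta+1)^{-\half}$ gains one full derivative, the actual forcing term for $\phi_\pm$ lies in $L^2([0,T],H^{3s-\frac32})$. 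The lower-order terms $\phi\psi$ and $|\psi|^2$ land in $H^{2s-1}$ and $H^{2s-\frac32}$ respectively and are therefore strictly more regular, so they do not dominate. The two-sided restriction $\frac13<s\le\frac23$ is exactly the admissibility range: the lower bound $s>\frac13$ is forced by $\frac{1}{r^*}\le 1$ in the Klein--Gordon triple, while the upper bound $s\le\frac23$ is forced by $3s-2\le 0$, needed so that $L^r\hookrightarrow H^{3s-2}$; all remaining admissibility conditions are weaker.

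Next I would feed these into the standard inhomogeneous estimates $\|\psi\|_{X^{m,1}[0,T]}\lesssim\|\psi_0\|_{H^m}+\|F\|_{L^2([0,T],H^m)}$ and its $X^{m,1}_\pm$ analogue. Since $3s-2\le s$ for $s\le 1$, the datum $\psi_0\in H^s$ already lies in $H^{3s-2}$, and likewise $\phi_\pm(0)=\phi_0\pm i(-\Delta+1)^{-\half}\phi_1\in H^{s+\half}\subset H^{3s-\frac32}$ because $3s-\frac32\le s+\half$ for $s\le 1$. This yields $\psi\in X^{3s-2,1}[0,T]$ and $\phi_\pm\in X^{3s-\frac32,1}_\pm[0,T]$. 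The trivial inclusions $\psi\in C^0_tH^s\subset X^{s,0}[0,T]$ and $\phi_\pm\in C^0_tH^{s+\half}\subset X^{s+\half,0}_\pm[0,T]$ (the latter using $(-\Delta+1)^{-\half}\partial_t\phi\in C^0_tH^{s+\half}$) complete the two intersections. Finally, interpolating each pair at a parameter $\theta$ slightly above $\half$ raises the time exponent to $\half+$ at the cost of an arbitrarily small spatial loss; since the spatial indices interpolate as $\theta(3s-2)+(1-\theta)s\to 2s-1$ and $\theta(3s-\frac32)+(1-\theta)(s+\half)\to 2s-\half$ as $\theta\to\half$ (and $2s-2<0$ produces the loss), this gives precisely $X^{2s-1-,\half+}[0,T]$ and $X^{2s-\half-,\half+}_\pm[0,T]$.

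I do not anticipate a real obstacle: the argument is a dimensional copy of Lemma \ref{Lemma1.5}, requiring no multilinear or dispersive estimate. The only delicate points are bookkeeping: computing the three-dimensional H\"older exponents correctly, verifying that the endpoints $\frac13<s\le\frac23$ arise intrinsically from the admissibility of the two embeddings rather than being imposed, confirming that the low-regularity data lie in the target spaces so the linear estimates apply, and choosing $\theta$ slightly above $\half$ in the interpolation to recover the exact indices with the advertised $\epsilon$-losses.
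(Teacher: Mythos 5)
Your proposal is correct and follows exactly the paper's own route: the proof is the dimensional transcription of Lemma \ref{Lemma1.5} via Zhou's idea, with the three-dimensional Sobolev exponents $\frac1b=\half-\frac a3$ giving $l=3s-2$ for the Schr\"odinger forcing and $m=3s-\frac52$ (hence $3s-\frac32$ after the gain from $(-\Delta+1)^{-\half}$) for the Klein--Gordon forcing, the constraints $s>\frac13$ and $s\le\frac23$ arising from $\frac{1}{r^*}<1$ and $l\le 0$, followed by the inhomogeneous linear estimates and interpolation at $\theta=\half+$. Your bookkeeping (including the observation that the quadratic terms are strictly more regular, which the paper only asserts) matches the paper's computation, and you even correct its typo $\frac1r=\half-\frac l2$ to $\half-\frac l3$.
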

\begin{proof}
We use the same notation as in the proof of Lemma \ref{Lemma1.5}, i.e. $\frac{1}{r}=\half-\frac{l}{2}$, $\frac{1}{p}=\half-\frac{s+\half}{3}$ , $\frac{1}{q} = \half-\frac{s}{3}$ , so that $\frac{1}{r}= \frac{2}{p} + \frac{1}{q}= \frac{7}{6}-s < 1$ , if $s > \frac{1}{6}$ and $l=3s-2 \le 0$ , if $s \le \frac{2}{3}$ . Moreover $\frac{1}{r^*} = \half - \frac{m}{3}$ , $\frac{1}{r^*} = \frac{1}{p} + \frac{2}{q} = \frac{4}{3}-s < 1$ , if $s > \frac{1}{3}$ , and $m=3s-\frac{5}{2} \le 0$ for $s \le \frac{5}{6}$ , This gives the necessary estimates for the cubic nonlinearities exactly as in Lemma \ref{Lemma1.5}. The quadratic nonlinearities are easily treated in the same way.
\end{proof}
Finally, in order to obtain the claimed unconditional uniqueness of Theorem \ref{Theorem1.2} it suffices to consider the case $s=\half+\epsilon$ . By Lemma \ref{Lemma2.5} we obtain $\psi \in X^{0,\half+}[0,T]$ and $\phi \in X^{\half+,\half+}[0,T]$ . But in these spaces the uniqueness holds by Theorem \ref{Theorem}.
\end{proof}

\end{document}